\newtheorem{thm}{Theorem}[subsection]
\newtheorem{prop}{Proposition}[subsection]
\newtheorem{conj}{Conjecture}[subsection]
\newtheorem{cor}{Corollary}[subsection]
\theoremstyle{definition}
\newcommand{\C}{{\mathbb C}}
\newcommand{\Z}{{\mathbb Z}}
\newcommand{\Ext}{\operatorname{Ext}}
\newcommand{\Hom}{\operatorname{Hom}}
\newcommand{\0}{\bar 0}
\newcommand{\1}{\bar 1}
\newcommand{\g}{\ensuremath{\mathfrak{g}}}
\newcommand{\h}{\ensuremath{\mathfrak{h}}}
\newcommand{\e}{\ensuremath{\mathfrak{e}}}
\newcommand{\f}{\ensuremath{\mathfrak{f}}}
\newcommand{\res}{\ensuremath{\operatorname{res}}}
\numberwithin{equation}{subsection}
\numberwithin{table}{subsection}
\begin{document}

\title[Detecting Cohomology for Lie Superalgebras]
{\bf Detecting Cohomology for Lie Superalgebras}

\author{\sc Gustav I. Lehrer}
\address
{School of Mathematics and Statistics F07\\
University of Sydney NSW 2006\\
Australia }
\thanks{Research of the first and third authors was supported in part by ARC grant DP0772870} \email{gustav.lehrer@sydney.edu.au}

\author{\sc Daniel K. Nakano}
\address
{Department of Mathematics\\ University of Georgia \\
Athens\\ GA~30602, USA}
\thanks{Research of the second author was supported in part by NSF
grant  DMS-1002135} \email{nakano@math.uga.edu}

\author{\sc Ruibin Zhang}
\address
{School of Mathematics and Statistics F07\\
University of Sydney NSW 2006\\
Australia }
\email{ruibin.zhang@sydney.edu.au}

\date{October 2010}

\maketitle

\begin{abstract} In this paper we use invariant theory to develop the notion of cohomological detection for Type I classical Lie superalgebras. 
In particular we show that the cohomology with coefficients in an arbitrary module can be detected on smaller subalgebras. 
These results are used later to affirmatively answer questions, which were originally posed in \cite{BKN1} and \cite{BaKN}, about realizing support varieties for Lie superalgebras via 
rank varieties constructed for the smaller detecting subalgebras. 
\end{abstract} 

\section{Introduction}

\subsection{} For finite groups there are well known local-global principles which enable
the study of their representation theory and cohomology via that of proper subgroups. For example if $G$ is a
finite group, $k$ is field of characteristic $p>0$, and $P$ is a $p$-Sylow subgroup of $G$, then
the restriction map induces an embedding $\text{res}:\text{Ext}^{\bullet}_{kG}(M,N)\hookrightarrow
\text{Ext}^{\bullet}_{kP}(M,N)$ where $kG$ (resp. $kP$) is the group algebra of $G$ (resp. $P$).
We therefore say that the cohomology is ``detected by'' the $p$-Sylow subgroups.
Another collection of subgroups which detect the cohomology is the set ${\mathcal E}$
of elementary abelian $p$-subgroups.
Here the restriction map
induces an inseparable isogeny ($F$-isomorphism):
$$\text{H}^{\bullet}(G,k)\rightarrow \lim_{E\in {\mathcal E}}\text{H}^{\bullet}(E,k).$$
Moreover, a cohomology class $\zeta\in \text{Ext}^{\bullet}_{kG}(M,M)$ is nilpotent if and only if
$\text{res}(\zeta)$ is nilpotent in $\text{Ext}^{\bullet}_{kE}(M,M)$ for every $E\in {\mathcal E}$.

Such cohomological ``detection theorems'' may be used to deduce properties
of support varieties of finite groups. Let $M$ be a finite-dimensional module for $kG$ and
write ${\mathcal V}_{kG}(M)$ (resp. ${\mathcal V}_{kP}(M)$) for its support variety over $kG$ (resp.
$kP$). The restriction map induces a morphism of algebraic varieties
$\text{res}^{*}:{\mathcal V}_{kP}(M)\rightarrow {\mathcal V}_{kG}(M)$
which is finite to one. Moreover,
$${\mathcal V}_{kG}(M)=\bigcup_{E\in {\mathcal E}}\text{res}^{*}({\mathcal V}_{kE}(M)).$$
Detectability on small subgroups is a rather special feature of modular group algebras. For
other finite-dimensional cocommutative Hopf algebras, like the restricted enveloping algebra of
a restricted Lie algebra, one can define cohomology and support varieties, but cohomology is rarely
detected on a {\em finite} set of smaller (proper) subalgebras.

\subsection{} We shall be concerned in this work with the 
representation theory of finite-dimensional classical 
Lie superalgebras ${\mathfrak g}={\mathfrak g}_{\0}
\oplus {\mathfrak g}_{\1}$ over the complex numbers, which has strong 
analogies with the finite group case. Boe, Kujawa, and Nakano \cite{BKN1}
recently initiated the study of local-global principles in the setting of Lie superalgebras.
Using natural properties of the action of the reductive 
group $G_{\0}$ (where $\text{Lie }G_{\0}=\g_{\0})$ on
${\mathfrak g}_{\1}$, they proved the existence of two types of detecting subalgebras,
${\mathfrak f}={\mathfrak f}_{\0} \oplus {\mathfrak f}_{\1}$ and ${\mathfrak e}=
{\mathfrak e}_{\0} \oplus {\mathfrak e}_{\1}$. These subalgebras were used to study
representations in the category ${\mathcal F}_{(\g,\g_{\0})}$ of
finite-dimensional $\g$-modules which are semisimple over $\g_{\0}$.
The present work is a continuation of that program.

In this situation, the
restriction maps induce isomorphisms:
$$\text{H}^{\bullet}(\g,\g_{\0},{\mathbb C})\cong
\text{H}^{\bullet}(\f,\f_{\0},{\mathbb C})^{N/N_{0}}\cong
\text{H}^{\bullet}(\e,\e_{\0},{\mathbb C})^W$$
where $N/N_{0}$ is a reductive group and $W=W(\e)$ is a finite pseudoreflection group. These
relative cohomology rings may be identified with the invariant ring
$S^{\bullet}({\mathfrak g}_{\1}^{*})^{G_{\0}}$, where $S^{\bullet}$ denotes the symmetric
algebra, and so are
finitely generated. This property was used to construct support varieties for
$M$ in ${\mathcal F}_{(\g,\g_{\0})}$. The restriction maps in cohomology induce embeddings
of support varieties:
\begin{equation}\label{supportemb}
{\mathcal V}_{(\e,\e_{\0})}(M)/W\hookrightarrow
{\mathcal V}_{(\f,\f_{\0})}(M)/(N/N_{0})\hookrightarrow
{\mathcal V}_{(\g,\g_{\0})}(M).
\end{equation}
It was suspected that these embeddings are in fact isomorphisms, but when these varieties were introduced
there was no reasonable theory of cohomological detection for arbitrary modules in 
${\mathcal F}_{(\g,\g_{\0})}$.

An analogous theory has been developed for the Lie superalgebras $W(n)$ and $S(n)$ of Cartan type
in \cite{Ba, BaKN}. These Lie superalgebras are ${\mathbb Z}$-graded and detecting subalgebras
were constructed using the reductive group corresponding to the zero component of the Lie superalgebra.

\subsection{} The main goal of this paper is to develop the remarkable
theory of cohomological detection for arbitrary Type I classical Lie superalgebras and the
Lie superalgebras of Cartan type, $W(n)$ and $S(n)$. The class of classical Lie superalgebras under 
consideration
includes the general linear Lie superalgebra ${\mathfrak g}=\mathfrak{gl}(m|n)$.

In Section 2, we review the fundamental definitions of classical Lie superalgebras and the
constructions of the detecting subalgebras $\f$ and $\e$. We also indicate how detecting subalgebras are
constructed for $W(n)$ and $S(n)$. In the following section, we
prove that for Type I classical Lie superalgebras, the restriction map
\begin{equation}\label{E:injectf}
\text{res}:\text{H}^{n}(\g,\g_{\0},M)\hookrightarrow \text{H}^{n}(\f,\f_{\0},M)
\end{equation}
is injective for all $n\geq 0$ and $M$ in ${\mathcal F}_{(\g,\g_{\0})}$.
The same arguments may be used to verify cohomological embedding results for
$W(n)$ and $S(n)$.

In Section 4, we show by example that (\ref{E:injectf}) does not hold when $\f$ is
replaced by $\e$. However, one can describe a relationship between support varieties
of $\f$ and $\e$ by using an auxiliary sub Lie superalgebra $\overline{\f}$.
These cohomological embedding results are then applied to the theory of
support varieties, and used to prove that the embeddings given in (\ref{supportemb})
are indeed isomorphisms of varieties. One consequence of this result is the concrete
realization of the support variety ${\mathcal V}_{(\g,\g_{\0})}(M)$ as a quotient of
a rank variety over $\e_{\1}$ by the finite (pseudo) reflection group $W=W(\e)$
(cf. Theorem~\ref{T:supportprop}(a)).
Our results indicate the importance of the subalgebras $\f$ and $\e$ for the 
theory of classical Lie superalgebras.
Finally, in Section 5, we apply our results to show that these support varieties can be viewed as support
data as defined by Balmer \cite{Bal}. We also indicate how the support theory fits 
into the classical combinatorial
notion of atypicality as defined by Kac, Wakimoto and Serganova.

This paper was completed while the second author was visiting the University of Sydney during
Spring 2010. The second author would like to express his gratitude for the hospitality and
support of the University of Sydney and the University of New South Wales during
his residence in Australia.

\section{Detecting subalgebras}

\subsection{Notation: } We will use and summarize the conventions developed in
\cite{BKN1, BKN2, BKN3}. For more details we refer the reader to \cite[Section 2]{BKN1}.

Throughout this paper, let ${\mathfrak g}$ be a Lie superalgebra over the complex numbers ${\mathbb C}$.
In particular, ${\mathfrak g}={\mathfrak g}_{\0}\oplus {\mathfrak g}_{\1}$ is a $\Z_{2}$-graded vector space
with a supercommutator $[\;,\;]:\g \otimes \g \to \g$. A finite dimensional Lie superalgebra
${\mathfrak g}$ is called \emph{classical} if there is a connected reductive algebraic group $G_{\0}$ such
that $\operatorname{Lie}(G_{\0})=\g_{\0},$ and the action of $G_{\0}$ on $\g_{\1}$ differentiates to
the adjoint action of $\g_{\0}$ on $\g_{\1}.$  We say that ${\mathfrak g}$ is a \emph{basic classical}
Lie superalgebra if it is a classical Lie superalgebra with a nondegenerate invariant supersymmetric
even bilinear form.

Let $U(\g)$ be the universal enveloping superalgebra of ${\mathfrak g}$. We will be interested in
supermodules which are $\Z_{2}$-graded left $U(\g)$-modules. If $M$ and $N$ are ${\mathfrak g}$-supermodules
one can use the antipode and coproduct of $U({\mathfrak g})$ to define a ${\mathfrak g}$-supermodule
structure on the dual $M^{*}$ and the tensor product $M\otimes N$. For the remainder of the
paper the term ${\mathfrak g}$-module will mean a ${\mathfrak g}$-supermodule. In order to
apply homological algebra techniques, we will restrict ourselves to the
\emph{underlying even category}, consisting of ${\mathfrak g}$-modules with the degree preserving morphisms.
In this paper we will study homological properties of the category
${\mathcal F}_{({\mathfrak g},{\mathfrak g}_{\0})}$ which is the full subcategory of
finite dimensional ${\mathfrak g}$-modules which are finitely semisimple over ${\mathfrak g}_{\0}$ (a
${\mathfrak g}_{\0}$-module is \emph{finitely semisimple} if it decomposes into a direct sum of
finite dimensional simple ${\mathfrak g}_{\0}$-modules).

The category ${\mathcal F}:={\mathcal F}_{(\g,\g_{\0})}$ has enough injective
(and projective) modules. In fact, ${\mathcal F}_{(\g,\g_{\0})}$
is a Frobenius category (i.e., where injectivity is equivalent to projectivity) \cite{BKN3}.
Given $M, N$ in ${\mathcal F}$, let $\Ext_{\mathcal{F}}^{d}(M,N)$ be the degree $d$
extensions between $N$ and $M$. In practice, there is a concrete realization for these
extension groups via the relative Lie superalgebra cohomology for the pair
$({\mathfrak g},{\mathfrak g}_{\0})$:
$$\text{Ext}_{\mathcal{F}}^{d}(M,N)\cong \text{H}^{d}({\mathfrak g},{\mathfrak g}_{\0}; M^{*}\otimes N).$$
The relative cohomology can be computed using an explicit complex (cf. \cite[Section 2.3]{BKN1}).
Moreover, the cohomology ring
$$
R:=\text{H}^{\bullet}({\mathfrak g},{\mathfrak g}_{\0}; {\mathbb C})=
S^{\bullet}({\mathfrak g}_{\1}^*)^{G_{\0}}.
$$
Since $G_{\0}$ is reductive it follows that $R$ is finitely generated.

\subsection{Classical Lie superalgebras: }  In this section we review the construction of the two
families of (cohomological) detecting subalgebras for classical Lie superalgebras 
defined in \cite[Section 3,4]{BKN1}
using the invariant theory of $G_{\0}$ on ${\mathfrak g}_{\1}$.

First we consider the case when $G_{\0}$ has a {\em stable} action on ${\mathfrak g}_{\1}$
(cf. \cite[Section 3.2]{BKN1}). That is, there is an open dense subset of $\g_{\1}$ consisting of
semisimple points. Recall that a point $x\in {\mathfrak g}_{\1}$ is called
{\em semisimple} if the orbit $G_{\0}\cdot x$ is closed in ${\mathfrak g}_{\1}$.

Let $x_{0}$ be a generic point in ${\mathfrak g}_{\1}$; that is, $x_0$ is semisimple and
regular, in the sense that its stablizer has minimal dimension. 
Let $H=\text{Stab}_{G_{\0}}x_0$ and $N:=N_{G_{\0}}(H)$.
In order to construct a detecting subalgebra, we let
${\mathfrak f}_{\1}={\mathfrak g}_{\1}^{H}$, ${\mathfrak f}_{\0}=\text{Lie }N$, and set
$${\mathfrak f}={\mathfrak f}_{\0}\oplus {\mathfrak f}_{\1}.$$
Then ${\mathfrak f}$ is a classical Lie superalgebra and a sub Lie superalgebra
of ${\mathfrak g}$. The stability of the action of $G_{\0}$ on ${\mathfrak g}_{\1}$ 
implies the following properties.

\begin{itemize}
\item[(2.2.1)] The restriction homomorphism $S(\g_{\1}^{*}) \to S(\f_{\1}^{*})$ induces an isomorphism
\[
\operatorname{res}: \text{H}^{\bullet}({\mathfrak g},{\mathfrak g}_{\0},{\mathbb C}) \rightarrow
\text{H}^{\bullet}({\mathfrak f},{\mathfrak f}_{\0},{\mathbb C})^{N/N_{0}}.
\]
Here $N_{0}$ is the connected component of the identity in $N$.
\item[(2.2.2)] The set $G_{\0}\cdot {\mathfrak f}_{\1}$ is dense in $\g_{\1}.$
\end{itemize}

Next we recall the notion of {\em polar action} introduced by Dadok and Kac \cite{dadokkac}.
Let $v \in {\mathfrak g}_{\1}$ be a semisimple element, and set
\begin{equation*}
{\mathfrak e}_{v}=\left\{x \in \g_{\1} \:\vert\:  \g_{\0}.x \subseteq \g_{\0}.v \right\},
\end{equation*}
where $\g_{\0}$ is the Lie algebra of $G_{\0}$. The action of $G_{\0}$ on ${\mathfrak g}_{\1}$
is called {\em polar} if for some semisimple element $v \in {\mathfrak g}_{\1}$ we have
$\operatorname{dim} {\mathfrak e}_{v} = \operatorname{dim} S({\mathfrak g}_{\1}^{*})^{G_{\0}},$
where $\operatorname{dim} S({\mathfrak g}_{\1}^{*})^{G_{\0}}$ is the Krull dimension of this ring. 
The vector space ${\mathfrak e}_{v}$ is called a \emph{Cartan subspace}; let
${\mathfrak e}_{\1}$ denote a fixed choice of a Cartan subspace.

If the action of $G_{\0}$ on $\g_{\1}$ is both stable and polar, we
can further assume
\begin{equation*}\label{E:containments}
x_{0} \in \e_{x_{0}}=\e_{\1} \subseteq {\mathfrak f}_{\1},
\end{equation*}
where $x_{0}$ and ${\mathfrak f}_{\1}$ are as above. Furthermore,
the Cartan subspace is unique up to conjugation by $G_{\0}$ (cf. \cite[Theorem 2.3]{dadokkac}).
Set $\e_{\0}=\text{Lie}(H)$. Then the detecting subalgebra ${\mathfrak e}$ is the classical
Lie sub-superalgebra of ${\mathfrak g}$ defined by:
$${\mathfrak e}={\mathfrak e}_{\0}\oplus {\mathfrak e}_{\1}.$$

Assume $\g$ is a classical Lie superalgebra where the action of $G_{\0}$ is both
stable and polar on ${\mathfrak g}_{\1}$. Then by \cite[Theorem 3.3.1]{BKN1} we have
the following two facts.
\begin{itemize}
\item[(2.2.3)] The restriction homomorphism $S(\g_{\1}^{*}) \to S(\e_{\1}^{*})$ induces an isomorphism
$$
\operatorname{res}: \text{H}^{\bullet}({\mathfrak g},{\mathfrak g}_{\0},{\mathbb C}) \rightarrow
\text{H}^{\bullet}({\mathfrak e},{\mathfrak e}_{\0},{\mathbb C})^{W},
$$
where $W=W({\mathfrak e})$ is a finite pseudoreflection group. In particular, $R$ is a polynomial algebra.
\vskip .15cm
\item[(2.2.4)] The set $G_{\0}.\e_{\1}$ is dense in $\g_{\1}$.
\end{itemize}

For the Lie superalgebras ${\mathfrak g}=W(n)$ and $S(n)$ detecting families (analogous 
to the $\f$'s) were also
constructed using stable actions. We will describe a basis for these subalgebras below.

\subsection{ Type I Lie superalgebras:} A Lie superalgebra is said to be of {\em Type I} if
it admits a $\Z$-grading ${\mathfrak g}=\g_{-1}\oplus {\g}_{0}\oplus {\g}_{1}$ concentrated
in degrees $-1,$ $0,$ and $1$ with ${\mathfrak g}_{\0}={\mathfrak g}_{0}$ and
${\mathfrak g}_{\1}=\g_{-1}\oplus {\g}_{1}$. Otherwise, ${\mathfrak g}$ is of Type II.
Examples of Type I Lie superalgebras include: $\mathfrak{gl}(m|n)$ and simple Lie superalgebras of
types $A(m,n)$, $C(n)$ and $P(n)$.

The simple modules for ${\mathfrak g}$, a Type I classical Lie superalgebra, can be constructed in the
following way. Let ${\mathfrak t}$ be a Cartan subalgebra of ${\mathfrak g}_{\0}$ and
$X^{+}_{0} \subseteq {\mathfrak t}^{*}$ be the set of dominant integral weights (with
respect to a fixed Borel subalgebra). For $\lambda \in X^{+}_{0},$ let $L_{0}(\lambda)$
be the simple finite dimensional ${\mathfrak g}_{\0}$-module of highest weight $\lambda$. Set
$$
{\mathfrak g}^{+}= {\mathfrak g}_{0} \oplus {\mathfrak g}_{1}
\qquad \text{and} \qquad  {\mathfrak g}^{-}= {\mathfrak g}_{0} \oplus {\mathfrak g}_{-1} .
$$
Since ${\mathfrak g}$ is a Type I Lie superalgebra ${\mathfrak g}_{\pm 1}$ is an abelian
ideal of ${\mathfrak g}^{\pm}$. We can therefore
view $L_{0}(\lambda)$ as a simple ${\mathfrak g}^{\pm}$-module via inflation.

For each $\lambda\in X^{+}_{0}$, we construct the \emph{Kac module} and the \emph{dual Kac module}
by using the tensor product and the Hom-space in the following way:
$$
K(\lambda)=U({\mathfrak g})\otimes_{U({\mathfrak g}^{+})} L_{0}(\lambda)
\qquad \text{and} \qquad K^{-}(\lambda) =
\Hom_{U({\mathfrak g}^{-})}\left(U({\mathfrak g}), L_{0}(\lambda) \right).
$$

The module $K(\lambda)$ has a unique maximal submodule. The head of $K(\lambda)$ is the simple
finite dimensional ${\mathfrak g}$-module $L(\lambda)$. Then $\{L(\lambda):\ \lambda \in X_{0}^{+}\}$
is a complete set of non-isomorphic simple modules in $\mathcal{F}_{(\g,\g_{\0})}$.  Let $P(\lambda)$
(resp.\ $I(\lambda)$) denote the projective cover 
(resp.\ injective hull) in $\mathcal{F}_{(\g,\g_{\0})}$ for the
simple ${\mathfrak g}$-module $L(\lambda)$. These are all finite-dimensional. Moreover, the projective
covers admit filtrations with sections being Kac modules and the injective hulls
have filtrations whose sections are dual Kac modules. These filtrations also respect the
ordering on weights and thus ${\mathcal F}_{(\g,\g_{\0})}$ is a highest 
weight category (cf. \cite[Section 3]{BKN3}) as defined in
\cite{CPS}.

\subsection{General Linear Superalgebra:} The prototypical example of a Type I classical Lie superalgebra
admitting both a stable and polar action of $G_{\0}$ on $\g_{\1}$ is $\g=\mathfrak{gl}(m|n)$,
which as a vector space is the set of $m+n$ by $m+n$ matrices. As
basis one may take the matrix units $E_{i,j}$ where $1\leq i,j \leq m+n$. The degree zero component
${\mathfrak g}_{\0}$ is the span of $E_{i,j}$ where $1\leq i,j  \leq m$ or $m+1 \leq i,j \leq m+n$.
As a Lie algebra ${\mathfrak g}_{\0}\cong \mathfrak{gl}(m)\times \mathfrak{gl}(n)$, and
the corresponding reductive group is $G_{\0}=GL(m)\times GL(n)$. Note that $G_{\0}$ acts on
$\mathfrak g_{\1}$ via the adjoint representation. A basis for ${\mathfrak g}_{\1}$ is
given by the $E_{i,j}$ such that $m+1\leq i \leq m+n$ and $1\leq j \leq n$ or
$1\leq i \leq m$ and $m+1 \leq j \leq m+n$.

Observe that ${\mathfrak g}_{\1}={\mathfrak g}_{-1}\oplus {\mathfrak g}_{1}$ where ${\mathfrak g}_{-1}$
(resp. ${\mathfrak g}_{1}$) consists of the lower triangular matrices (resp. upper triangular matrices)
in ${\mathfrak g}_{\1}$. The action of $G_{\0}$ on $\g_{-1}$ is given by $(A,B).X=BXA^{-1}$ so
the orbits are the matrices of a given rank in $\g_{-1}$. By results from
elementary linear algebra, $G_{\0}\cdot \f_{\1}$ and $G_{\0}\cdot \e_{\0}$ are dense in $\g_{\1}$.

For simplicity of exposition, let us assume that
$m=n=r$. With an appropriate choice of $x_{0}$ the detecting
subalgebras have the following descriptions. The detecting subalgebra $\f=\f_{\0}\oplus \f_{\1}$ where
$\f_{\1}$ is the span of $\{E_{i,i+r}:\  i=1,2,\dots,r\}\cup \{E_{i+r,i}:\  i=1,2,\dots,r\}$
and $\f_{\0}$ is the span of $[\f_{1},\f_{\1}]$. Here $H\cong T^{r}$ where $T$ is a one-dimensional
torus, and $N\cong W\ltimes T^{r}$ where $W=\Sigma_{r}\ltimes ({\mathbb Z}_{2})^{r}$ (hyperoctahedral
group). The detecting subalgebra $\e=\e_{\0}\oplus e_{\1}$ where $\e_{\1}$ is the span of
$\{E_{i,i+r}+E_{i+r,i}:\  i=1,2,\dots,r\}$ and $\e_{\0}$ is the span of $[\e_{1},\e_{\1}]$.
Constructions of detecting subalgebras for other classical Lie superalgebras are explicitly described in
\cite[Section 8]{BKN1}.

\subsection{The Witt algebra $W(n)$ and $S(n)$:}  Let $n\geq 2$, and 
$\Lambda^{\bullet}(V)$ be the exterior algebra of the vector space $V={\mathbb C}^n$. The Lie superalgebra
$W(n)$ is the set of all superderivations of $\Lambda^{\bullet}(V)$, 
and the superalgebra structure is provided via the supercommutator bracket. 
The Lie superalgebra $W(n)$ inherits a ${\mathbb Z}$-grading,
$${\mathfrak g}:=W(n)=W(n)_{-1}\oplus W(n)_0\oplus \dotsb \oplus W(n)_{n-1},$$
from $\Lambda^{\bullet}(V)$ by letting ${\mathfrak g}_{k}:=W(n)_k$ be the 
superderivations which increase the degree of
a homogeneous element by $k$.  Furthermore, the ${\mathbb Z}_2$-grading on 
$W(n)$ is obtained from the $\Z$-grading by taking
$W(n)_{\0}= \oplus_{k} W(n)_{2k}$ and $W(n)_{\1}= \oplus_{k} W(n)_{2k+1}$.

One can give an explicit basis for $W(n)$ in the following way. We have
$$W(n)\cong \Lambda(n)\otimes V^\ast.$$
We can fix an ordered basis $\{\xi _1, \dotsc , \xi _n\}$  for $V$. For each ordered subset $I=\{i_1, \dotsc , i_s\}$ of
$N=\{1, \dotsc , n\}$ with $i_1<i_2< \dotsb  <i_s$, let $\xi _I=\xi _{i_1}\xi _{i_2}\dotsb \xi _{i_s}$.
The set of all such $\xi_I$ forms a basis for $\Lambda(n)$.  If $1\leq i\leq n$ let $\partial _i$ be the
element of $W(n)$ given by partial differentiation: i.e., $\partial _i(\xi_j)=\delta _{ij}$. Write
$\xi_I\partial _i$ for $\xi_I\otimes \partial _i$. Then an
explicit basis for $W(n)$ is given by $\xi_{I}\partial_{i}$ where $I$ 
runs over all ordered subsets of $N$ and
$i=1,2,\dots,n$.  Observe that $W(n)_{k}$ is spanned by the basis 
elements $\xi_I\partial _i$ with $|I| =k+1$.
In particular, $\g_{0}$ is spanned by the elements $\xi_i\partial_j$, from which one
easily sees that $\g_0=\mathfrak{gl}(n)$.

Let ${\mathcal C}_{(\g,\g_{0})}$ be the category of $\g$-supermodules 
which are completely reducible over $\g_{0}$.
In \cite[Section 5.5]{BaKN}, a detecting subalgebra was constructed as follows. There
exists a generic point $x_{0}$ in ${\mathfrak g}_{-1}\oplus {\mathfrak g}_{1}$
for the action of $G_{0}=GL(n)$.
Set $H=\text{Stab}_{G_{0}}\ x_{0}=T_{n-1}$, an $(n-1)$-dimensional torus. 
The normalizer $N=N_{G_{0}}(H)$ is
$\Sigma_{n-1}\ltimes T$ where $\Sigma_{n-1}$ is the symmetric group of degree 
$n-1$ and $T$ is the torus of $G_{0}$ consisting
of diagonal matrices. Set $\f_{\1}=(\g_{-1}\oplus \g_{1})^{H}$. Then $\f_{1}$ is the span of the vectors
$\{\partial_{1},\xi_{1}\xi_{i}\partial_{i}:\ i=2,3,\dots,n\}$. 
Set $\f_{\0}=\text{Lie }(N)$. Then $\f=\f_{\0}\oplus \f_{\1}$
detects cohomology (as in the classical stable case), that is the restriction map:
$$
\text{res}:\text{H}^{\bullet}(\g,\g_{0},{\mathbb C})\rightarrow 
\text{H}^{\bullet}(\f,\f_{\0},{\mathbb C})^{N}
$$
is an isomorphism.

In \cite{BaKN} a support variety theory was developed for finite dimensional 
modules in ${\mathcal C}_{(\g,\g_{0})}$  (and
${\mathcal C}_{(\f,\f_{\0})}$), it was shown that for such a module $M$ there exists an injection of
algebraic varieties:
\begin{equation} \label{Cartaninjection}
{\mathcal V}_{(\f,\f_{\0})}(M)/\Sigma_{n-1}\hookrightarrow {\mathcal V}_{(\g,\g_{0})}(M).
\end{equation} 
In Theorem~\ref{T:isosupports2} we will prove that this is indeed an isomorphism of varieties. 

A similar development of cohomology and support varieties has been investigated by 
Bagci for the Lie superalgebra
of Cartan type $S(n)$. Detecting subalgebras analogous to $\f$ have been constructed. 
Details are omitted here and
left to the interested reader (see \cite{Ba}).

\section{Cohomological Embedding}

\subsection{The Stable Case: } The goal in this section is to prove that the 
relative $({\mathfrak g},{\mathfrak g}_{\0})$
cohomology for $M\in {\mathcal F}_{(\g,\g_{\0})}$ embeds in the relative cohomology for
$({\mathfrak f},{\mathfrak f}_{\0})$. Our first result uses a dimension shifting argument to reduce to 
looking at cohomology in degree one. 

\begin{prop}\label{P:reducecoho1} Let ${\mathfrak g}$ be a classical Lie 
superalgebra which is stable. Suppose that
$\operatorname{res}:\operatorname{H}^{1}(\g,\g_{\0},M)\rightarrow \operatorname{H}^{1}(\f,\f_{\0},M)$
is an injective map for every $M\in {\mathcal F}_{(\g,\g_{\0})}$. Then
$\operatorname{res}:\operatorname{H}^{n}(\g,\g_{\0},M)\rightarrow \operatorname{H}^{n}(\f,\f_{\0},M)$
is an injective map for all $n\geq 0$ and $M\in {\mathcal F}_{(\g,\g_{\0})}$.
\end{prop}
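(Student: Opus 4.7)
The plan is induction on $n$. For $n=0$ the cohomology groups are $M^{\g}$ and $M^{\f}$ respectively, and the restriction is the obvious inclusion, which is injective since $\f \subseteq \g$; $n=1$ is exactly the hypothesis.

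For the inductive step at $n \geq 2$, I would embed $M$ into an injective object $I$ of $\mathcal{F}_{(\g,\g_{\0})}$ (possible since the category has enough injectives) and form the short exact sequence $0 \to M \to I \to N \to 0$ in $\mathcal{F}_{(\g,\g_{\0})}$. Because the category is Frobenius, $I$ is simultaneously projective, so $H^k(\g,\g_{\0},I) = 0$ for all $k \geq 1$. The long exact sequence in $(\g,\g_{\0})$-cohomology then produces an isomorphism $\partial_{\g}\colon H^{n-1}(\g,\g_{\0},N) \xrightarrow{\cong} H^n(\g,\g_{\0},M)$. The same short exact sequence, being exact also in $\mathcal{F}_{(\f,\f_{\0})}$ (using that $\f_{\0}$ is reductive, so semisimplicity over $\g_{\0}$ descends to semisimplicity over $\f_{\0}$), gives a connecting map $\partial_{\f}$, and naturality of the long exact sequences yields the commutative square
\[
\begin{CD}
H^{n-1}(\g,\g_{\0},N) @>{\partial_{\g},\,\cong}>> H^n(\g,\g_{\0},M) \\
@VV{\operatorname{res}}V @VV{\operatorname{res}}V \\
H^{n-1}(\f,\f_{\0},N) @>{\partial_{\f}}>> H^n(\f,\f_{\0},M).
\end{CD}
\]
The inductive hypothesis applied to $N$ makes the left vertical arrow injective; hence the right vertical arrow (our goal) will be injective as soon as $\partial_{\f}$ is injective.

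The injectivity of $\partial_{\f}$ is equivalent, via the $(\f,\f_{\0})$ long exact sequence, to the vanishing $H^{n-1}(\f,\f_{\0},I) = 0$ for $n \geq 2$, i.e., to the acyclicity of injectives of $\mathcal{F}_{(\g,\g_{\0})}$ as $(\f,\f_{\0})$-modules. This is the crux of the argument and the main obstacle. I would attack it by using that the indecomposable injectives in $\mathcal{F}_{(\g,\g_{\0})}$ may be realized as direct summands of $U(\g)\otimes_{U(\g_{\0})} L$ for finite-dimensional $\g_{\0}$-modules $L$, and by employing the PBW decomposition $U(\g) \cong \Lambda(\g_{\1}) \otimes U(\g_{\0})$ of vector spaces together with property (2.2.2), namely that $G_{\0}\cdot\f_{\1}$ is Zariski dense in $\g_{\1}$. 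Either one constructs a contracting homotopy on the relative Chevalley--Eilenberg complex $\operatorname{Hom}_{\f_{\0}}(\Lambda^\bullet(\f_{\1}), I)$ exploiting this density, or one proves the stronger statement that such $I$, restricted to $\f$, is itself injective in $\mathcal{F}_{(\f,\f_{\0})}$; the latter is natural in light of the Frobenius reciprocity for $U(\g)$ over $U(\f)$ coming from PBW, though it still requires care because we are working in the truncated categories $\mathcal{F}_{(\g,\g_{\0})}$ and $\mathcal{F}_{(\f,\f_{\0})}$ rather than full module categories.
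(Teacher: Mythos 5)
Your approach is the same as the paper's: reduce to degree one by dimension shifting with the short exact sequence $0 \to M \to I \to \Omega^{-1}(M) \to 0$, use that the category is Frobenius so that $I$ is projective as well as injective, and conclude by a commutative square of connecting maps. You also correctly pinpoint the one fact that makes the whole inductive step work, namely that $\operatorname{H}^{k}(\f,\f_{\0},I)=0$ for $k\geq 1$ when $I$ is injective in $\mathcal{F}_{(\g,\g_{\0})}$.

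The difference is that the paper does not treat this as an open problem: it simply states ``Note that $I$ is also injective in $\mathcal{F}_{(\f,\f_{\0})}$,'' relying on the structure theory of the relative categories established in the earlier papers (the objects of $\mathcal{F}_{(\g,\g_{\0})}$ that are injective, equivalently projective, are direct summands of $U(\g)\otimes_{U(\g_{\0})} L_{0}$, and the PBW factorization shows these restrict to projective objects of $\mathcal{F}_{(\f,\f_{\0})}$). So you have correctly located the lemma that needs to be invoked, and your proposed route via the PBW isomorphism $U(\g)\cong \Lambda(\g_{\1})\otimes U(\g_{\0})$ is indeed the standard one; but as written your argument stops short of establishing it, which leaves a genuine gap. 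One further caution: density of $G_{\0}\cdot\f_{\1}$ in $\g_{\1}$ is \emph{not} what powers this particular lemma — it is used later in the cochain-level injectivity arguments of Theorems 3.2 and 3.3 — so you should not expect a contracting homotopy built on that density here; the correct tool is the explicit form of the injective objects and the PBW/Frobenius-reciprocity argument, which works without any stability hypothesis.
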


\begin{proof}  For $n=0$ the statement of the theorem
is clear because $\operatorname{H}^{0}(\g,\g_{\0},M)=M^{\mathfrak g}$ (fixed
points under ${\mathfrak g}$) and $\operatorname{H}^{0}(\f,\f_{\0},M)=M^{\mathfrak f}$. By assumption the
result holds for $n=1$. Now assume by induction the result holds for $n<t$, and consider the
short exact sequence
$$0\rightarrow M \rightarrow I \rightarrow \Omega^{-1}(M)\rightarrow 0$$
where $I$ is the injective hull of $M$ in ${\mathcal F}_{(\g,\g_{\0})}$. Note that
$I$ is also injective in ${\mathcal F}_{(\f,\f_{\0})}$.
Now applying the long exact sequence to the short exact sequence above and using these facts, we
have the following commutative diagram:
\begin{equation}
\CD
\text{H}^{t-1}(\g,\g_{\0},\Omega^{-1}(M)) @>\text{res}>> \text{H}^{t-1}(\f,\f_{\0},\Omega^{-1}(M))\\
@VVV @VVV\\
\text{H}^{t}(\g,\g_{\0},M) @>>\text{res}>  \text{H}^{t}(\f,\f_{\0},M)
\endCD
\end{equation}
where the vertical maps are isomorphisms. The top horizontal map is injective by induction.
Therefore, the bottom $\text{res}$ map is also injective.
\end{proof}

\subsection{} Let ${\mathfrak g}={\mathfrak g}_{-1}\oplus {\mathfrak g}_{0}\oplus {\mathfrak g}_{1}$
be a Type I Lie superalgebra. The detecting subalgebra ${\mathfrak f}$ has a triangular decomposition
which is compatible with the ${\mathbb Z}$-grading: ${\mathfrak f}={\mathfrak f}_{-1}\oplus
{\mathfrak f}_{0}\oplus {\mathfrak f}_{1}$ with $\f_{\pm 1}=\g_{\pm 1}^{H}$.  Note that
${\mathfrak f}^{\pm}={\mathfrak f}_{0}\oplus {\mathfrak f}_{\pm 1}$ and
${\mathfrak f}^{\pm}={\mathfrak f}\cap {\mathfrak g}^{\pm}$.

In order to analyze the question of embedding of cohomology of Type I Lie superalgebras we first
investigate the case $({\mathfrak g}^{\pm},{\mathfrak g}_{\0})$ and
$({\mathfrak f}^{\pm},{\mathfrak f}_{\0})$.

\begin{thm}\label{T:embedplusminus} Let ${\mathfrak g}$ be a classical Type I Lie superalgebra which is
stable. Then for all $M$ in ${\mathcal F}_{(\g,\g_{\0})}$ and
$n\neq 0$, the restriction map
$$\operatorname{H}^{n}({\mathfrak g}^{\pm},{\mathfrak g}_{\bar{0}},M)\rightarrow
\operatorname{H}^{n}({\mathfrak f}^{\pm},{\mathfrak f}_{\bar{0}},M)$$
is injective.
\end{thm}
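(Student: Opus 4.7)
The plan is to proceed in two stages. The first stage reduces the claim to the case $n=1$ by a dimension-shifting argument; the second stage tackles $n=1$ directly using the structure of $\g^{\pm}$ as a semidirect product of $\g_{\0}$ with the abelian ideal $\g_{\pm 1}$.

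For the reduction, I would apply the argument of Proposition~\ref{P:reducecoho1} verbatim, but to the pair $(\g^{\pm},\g_{\0})\to(\f^{\pm},\f_{\0})$. The required input is that the injective hull of $M$ in $\mathcal{F}_{(\g,\g_{\0})}$, when restricted to $(\g^{\pm},\g_{\0})$ and then to $(\f^{\pm},\f_{\0})$, remains injective in the respective subcategory. This is immediate once one observes that the induction functors $U(\g)\otimes_{U(\g^{\pm})}(-)$ and $U(\g^{\pm})\otimes_{U(\f^{\pm})}(-)$ are exact by the PBW theorem, so their right adjoints (the restrictions) preserve injectives. The same commutative-diagram argument then carries over, using the case $n=1$ as the base.

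For the degree-one case, since $\g_{\pm 1}\subset\g^{\pm}$ is an abelian ideal with reductive complement $\g_{\0}$, a relative $1$-cocycle is precisely a $G_{\0}$-equivariant linear map $\phi\colon\g_{\pm 1}\to M$ satisfying the super-symmetric condition $x\phi(y)+y\phi(x)=0$, and $\phi$ is a coboundary iff $\phi(y)=ym$ for some $m\in M^{G_{\0}}$. Assume $\phi$ restricts to a coboundary on $(\f^{\pm},\f_{\0})$, producing $m_{0}\in M^{\f_{\0}}$ with $\phi(x)=xm_{0}$ for $x\in\f_{\pm 1}$. By $G_{\0}$-equivariance, for $g\in G_{\0}$ and $x\in\f_{\pm 1}$ one has $\phi(gx)=g\phi(x)=(gx)(gm_{0})$. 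Since $G_{\0}\cdot\f_{\pm 1}$ is dense in $\g_{\pm 1}$ (a consequence of (2.2.2) combined with the $G_{\0}$-stability of the $\Z$-grading $\g_{\1}=\g_{-1}\oplus\g_{1}$, via projection), the ``local witnesses'' $gm_{0}$ describe $\phi$ on a Zariski-dense open subset of $\g_{\pm 1}$.

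The main obstacle is to assemble the family $\{gm_{0}\}_{g\in G_{\0}}$ into a single $G_{\0}$-fixed witness. I plan to consider the $G_{\0}$-stable affine subset $\mathcal{M}:=\{m\in M\,:\,ym=\phi(y)\text{ for all }y\in\g_{\pm 1}\}$, which, whenever nonempty, is a torsor over $M^{\g_{\pm 1}}$. The implication ``$\mathcal{M}\neq\emptyset\Rightarrow\mathcal{M}\cap M^{G_{\0}}\neq\emptyset$'' is then automatic from the vanishing of $H^{1}(G_{\0},M^{\g_{\pm 1}})$, which holds because $G_{\0}$ is reductive and $M^{\g_{\pm 1}}$ is a rational $G_{\0}$-module. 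The hard part will be showing $\mathcal{M}\neq\emptyset$: one must verify that when $g_{1}x_{1}=g_{2}x_{2}$ with $x_{i}\in\f_{\pm 1}$ and $g_{i}\in G_{\0}$, the difference $g_{1}m_{0}-g_{2}m_{0}$ annihilates all of $\g_{\pm 1}$. This compatibility check is where the $N$-equivariance properties of $m_{0}$, the stability of the $G_{\0}$-action on $\g_{\pm 1}$, and the Type~I hypothesis all come into play, and I expect it to be the most delicate part of the argument.
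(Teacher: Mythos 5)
There is a genuine gap. Your dimension-shift reduction to $n=1$ is sound and parallels the paper's use of Proposition~\ref{P:reducecoho1}; the observation that restriction of $I$ remains injective over $\g^{\pm}$ and $\f^{\pm}$ (via exactness of the relevant induction functors) is correct. But for the degree-one case your argument does not close. You set up $\mathcal{M}=\{m\in M : ym=\phi(y)\text{ for all }y\in\g_{\pm 1}\}$ and correctly note that if $\mathcal{M}\neq\emptyset$ then, since $\mathcal{M}$ is a $G_{\0}$-stable coset of $M^{\g_{\pm 1}}$ and $\operatorname{H}^{1}(G_{\0},M^{\g_{\pm 1}})=0$ by reductivity, a $G_{\0}$-fixed witness exists. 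That second half is fine. The problem is that $\mathcal{M}\neq\emptyset$ \emph{is} the theorem: it asserts precisely that a $G_{\0}$-invariant cocycle $\phi$ which is a coboundary over $\f_{\pm 1}$ is already a coboundary over $\g_{\pm 1}$, and you explicitly defer this to ``the most delicate part of the argument'' without supplying it.

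Two further comments on what you do say about this step. First, the ``compatibility check'' you describe is not what you actually need: if $g_{1}x_{1}=g_{2}x_{2}$ with $x_{i}\in\f_{\pm 1}$, then $G_{\0}$-equivariance of $\phi$ gives only $(g_{1}x_{1})(g_{1}m_{0}-g_{2}m_{0})=0$, i.e.\ annihilation at the single vector $g_{1}x_{1}$, not annihilation of all of $\g_{\pm 1}$. The issue is not the well-definedness of a pointwise assignment on the dense subset $G_{\0}\cdot\f_{\pm 1}$; it is that each $gm_{0}$ witnesses $\phi$ only on the subspace $g\f_{\pm 1}$, and there is no patching mechanism that produces a single global $m$. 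Second, this is exactly where the paper supplies the missing tool: it runs the Lyndon--Hochschild--Serre spectral sequence for the inclusion $\f_{1}\subseteq\g_{1}$, takes the first three terms of the five-term exact sequence, and uses the density of $G_{\0}\cdot\f_{1}$ at the \emph{cochain} level to show that $\operatorname{Im}\alpha\cap[(\g_{1})^{*}\otimes M]^{G_{\0}}=0$, where $\alpha$ is the inflation map $(\g_{1}/\f_{1})^{*}\otimes M\to(\g_{1})^{*}\otimes M$. Combined with exactness of $(-)^{G_{\0}}$ (so that $G_{\0}$-invariant classes have $G_{\0}$-invariant cocycle representatives), this yields the embedding $\operatorname{H}^{1}(\g_{1},M)^{G_{\0}}\hookrightarrow\operatorname{H}^{1}(\f_{1},M)$, which is the content your $\mathcal{M}\neq\emptyset$ step would need. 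Your torsor framework would give an alternative finish once this is in hand, but as written the proposal is missing the heart of the proof.
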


\begin{proof} Without loss of generality we can consider the case 
${\mathfrak g}^{+}={\mathfrak g}_{0}\oplus
{\mathfrak g}_{1}$. Since $\g_{1}$ is an ideal of $\g^{+},$ one has the
Lyndon-Hochschild-Serre (LHS) spectral sequence
for the pair $(\g_{1}, \{0 \})$ in $(\g^{+}, \g_{0})$ (cf.\ proof of \cite[Theorem 3.3.1]{BKN3}):
$$
\widetilde{E}_{2}^{i,j}=\text{H}^{i}(\g_{0},\g_{0},\text{H}^{j}(\g_{1},M))\Rightarrow
\text{H}^{i+j}(\g^{+},\g_{0},M).
$$
This spectral sequence collapses because modules are completely reducible over ${\mathfrak g}_{0}$ and
yields:
\begin{equation}\label{E:giso}
\text{H}^{n}(\g^{+},\g_{0},M)\cong \text{H}^{n}(\g_{1},M)^{\g_{0}}
\end{equation}
for all $n\geq 0$. Similarly, for $n\geq 0$
\begin{equation}\label{E:fiso}
\text{H}^{n}(\f^{+},\f_{0},M)\cong \text{H}^{n}(\f_{1},M)^{\f_{0}}.
\end{equation}

Next observe that $\f_{1}$ is an abelian Lie superideal in $\g_{1}$. Conseqently, we have
another LHS spectral sequence:
$$
E_{2}^{i,j}=\text{H}^{i}(\g_{1}/\f_{1},\text{H}^{j}(\f_{1},M))\Rightarrow
\text{H}^{i+j}(\g_{1},M).
$$
This gives rise to an exact sequence (i.e., the first three terms of the standard
five term exact sequence):
\begin{equation}\label{E:5exactseq}
0\rightarrow \text{H}^{1}(\g_{1}/{\f_{1}},M^{\f_{1}}) \rightarrow \text{H}^{1}(\g_{1},M)
\rightarrow \text{H}^{1}(\f_{1},M)^{\g_{1}/\f_{1}}\rightarrow \dots
\end{equation}
Under the restriction map we have
$$\text{res}:\text{H}^{1}(\g_{1},M)^{\g_{0}}\rightarrow \text{H}^{1}(\f_{1},M)^{\f_{0}}.$$
In order to prove the theorem it suffices by Proposition~\ref{P:reducecoho1}, (~\ref{E:giso}), and
(~\ref{E:fiso}) to demonstrate that the restriction map above is injective.

The sequence (~\ref{E:5exactseq}) arises by looking at the following exact sequence
at the cochain level:
$$0\rightarrow (\g_{1}/\f_{1})^{*}\otimes M \xrightarrow{\alpha}
(\g_{1})^{*}\otimes M \xrightarrow{\beta} (\f_{1})^{*}\otimes M \rightarrow 0.$$
Consider $\text{Hom}_{G_{0}}(\g_{1},M)\cong [(\g_{1})^{*}\otimes M]^{G_{0}}$ as a subspace of
$(\g_{1})^{*}\otimes M$. The restriction of the original map
$\beta$ to this subspace $\beta:\text{Hom}_{G_{0}}(\g_{1},M)\rightarrow
[(\f_{1})^{*}\otimes M]^{\f_0}$ is given by $\beta(\psi)=\psi\mid_{\f_{1}}$. Now $\beta$ is
an injective map. This can be seen as follows.  If $\psi\mid_{\f_{1}}=0$ then $\psi(\f_{1})=0$. 
The fact that
$\psi$ is $G_{0}$-invariant shows that $\psi(G_{0}\cdot\f_{1})$. Finally using the
density of $G_{0}\cdot \f_{1}$ in $\g_{1}$ implies that $\psi=0$.

This means that $\text{Im }\alpha\cap [(\g_{1})^{*}\otimes M]^{G_{0}}=0$. The first
map in (~\ref{E:5exactseq}) is induced by $\alpha$, thus restricting
the second map to $\text{H}^{1}(\g_{1},M)^{G_{0}}$ yields an embedding
$\text{H}^{1}(\g_{1},M)^{G_{0}}\hookrightarrow \text{H}^{1}(\f_{1},M)^{\g_{1}/\f_{1}}$.
Note that the representative cocycles in $\text{H}^{1}(\g_{1},M)^{G_{0}}$ can be chosen to
be in $[(\g_{1})^{*}\otimes M]^{G_{0}}$ because the fixed point functor $(-)^{G_{0}}$ is exact.
\end{proof}

%%%%%%%%%%%%%%%%%%%%%%%%%%%%%%%%%%%%%%%%%%%%%

\subsection{} We now combine information from both sides of the triangular decomposition of
${\mathfrak g}$ and ${\mathfrak f}$ to prove that the relative cohomology for $(\g,\g_{\0})$
embeds in the relative cohomology for $(\f,\f_{\0})$.

\begin{thm}\label{T:stablecase} Let ${\mathfrak g}$ be a classical Type I Lie superalgebra which
is stable. Then for all $M\in {\mathcal F}_{({\mathfrak g},{\mathfrak g}_{\bar{0}})}$ and
$n\neq 0$ the restriction map
$$\operatorname{H}^{n}({\mathfrak g},{\mathfrak g}_{\bar{0}},M)\rightarrow
\operatorname{H}^{n}({\mathfrak f},{\mathfrak f}_{\bar{0}},M)$$
is injective.
\end{thm}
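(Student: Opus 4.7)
The idea is to reduce to $n=1$ and then combine the output of Theorem~\ref{T:embedplusminus} applied to the two halves $(\g^{\pm},\g_{\0})$ of the triangular decomposition by means of the residual ``cross'' component of the $1$--cocycle condition.

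\emph{Reduction.}  By Proposition~\ref{P:reducecoho1} it suffices to treat the case $n=1$.  Let $[\phi]\in H^{1}(\g,\g_{\0},M)$ have trivial restriction to $(\f,\f_{\0})$, witnessed by $\phi|_{\f_{\1}} = d_{\f}m_{\f}$ with $m_{\f}\in M^{F_{\0}}$.  Decompose $\phi=\phi_{+}+\phi_{-}$ along $\g_{\1}=\g_{1}\oplus\g_{-1}$.

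\emph{Applying Theorem \ref{T:embedplusminus} on each side.}  The restriction map factors through $(\g^{\pm},\g_{\0})$, and the hypothesis forces $[\phi_{\pm}]\in H^{1}(\g^{\pm},\g_{\0},M)$ to restrict trivially to $(\f^{\pm},\f_{\0})$.  Theorem~\ref{T:embedplusminus} then produces $m_{\pm}\in M^{G_{\0}}$ with $\phi_{\pm}(z)=z\cdot m_{\pm}$ for $z\in\g_{\pm1}$; comparing $\phi_{\pm}|_{\f_{\pm1}}$ with $d_{\f}m_{\f}|_{\f_{\pm1}}$ shows that $m_{\pm}-m_{\f}\in M^{\f^{\pm}}$, so $n:=m_{+}-m_{-}\in M^{\f^{+}}+M^{\f^{-}}$.

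\emph{Cross--cocycle condition.}  Evaluating $x\phi(y)+y\phi(x)=0$ for $x\in\g_{1}$, $y\in\g_{-1}$, and using that $[x,y]\in\g_{\0}$ annihilates $m_{\pm}\in M^{G_{\0}}$, simplifies to $yx\cdot n = xy\cdot n = 0$.  Thus $n\in M^{G_{\0}}$ is annihilated by both $\g_{-1}\g_{1}$ and $\g_{1}\g_{-1}$ in $U(\g)$.

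\emph{Aligning $m_{+}$ and $m_{-}$.}  The pair $(m_{+},m_{-})$ is unique only up to shifts by $M^{\g^{\pm}}$, so it suffices to show that $n$ admits a decomposition $n=u-v$ with $u\in M^{\g^{+}}$ and $v\in M^{\g^{-}}$; then $m:=m_{+}-u=m_{-}-v\in M^{G_{\0}}$ is a common witness for $\phi = d_{\g}m$, and $[\phi]=0$ as desired.

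\emph{Main obstacle.}  The decisive step is upgrading the automatic containment $n\in M^{\f^{+}}+M^{\f^{-}}$ to $n\in M^{\g^{+}}+M^{\g^{-}}$.  I expect this to follow by combining the annihilation condition on $n$ extracted from the cross--cocycle step with the $G_{\0}$-invariance of $n$ and the density of $G_{\0}\cdot\f_{\1}$ in $\g_{\1}$ (property~(2.2.2)), by a density/invariance argument in the spirit of the concluding step of Theorem~\ref{T:embedplusminus}.  An alternative route is to set up the bicomplex $C^{p,q}=\Hom_{G_{\0}}(S^{p}\g_{-1}\otimes S^{q}\g_{1},M)$ with its two Koszul differentials and track injectivity along both grading directions, applying Theorem~\ref{T:embedplusminus} on the two axes and the density argument on the mixed $E_{1}^{p,q}$ ($p,q>0$); either way, the essential input is density together with $G_{\0}$-reductivity.
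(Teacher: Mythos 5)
Your outline reproduces the overall strategy of the paper's proof: reduce to $n=1$ via Proposition~\ref{P:reducecoho1}, split $\phi=\phi_{+}+\phi_{-}$ along the $\Z$-grading, and invoke Theorem~\ref{T:embedplusminus} on each half to produce $m_{\pm}\in M^{\g_{\0}}$ with $\phi_{\pm}=d_{\g^{\pm}}m_{\pm}$. The observation that the cross term of the cocycle condition, together with $[\g_1,\g_{-1}]\cdot m_{\pm}=0$, forces $\g_1\g_{-1}\cdot n=\g_{-1}\g_1\cdot n=0$ for $n=m_{+}-m_{-}$ is correct (and is in fact extra structure that the paper's own argument does not explicitly exploit). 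The reformulation of the conclusion as the existence of a decomposition $n=u-v$ with $u\in M^{\g^{+}}$, $v\in M^{\g^{-}}$ is also the right target.

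But the step you yourself flag as the ``main obstacle'' is a genuine gap, and it is precisely where all the work lives. You know $n\in M^{\f^{+}}+M^{\f^{-}}$, and you need $n\in M^{\g^{+}}+M^{\g^{-}}$; the density argument that worked in Theorem~\ref{T:embedplusminus} does not transplant directly, because it is the \emph{summands} $a\in M^{\f^{+}}$, $b\in M^{\f^{-}}$ of $n=a+b$ that need to be upgraded, and these are only $\f_{\0}$-invariant, not $G_{\0}$-invariant. The computation $0=g\cdot(f\cdot a)=(g\cdot f)\cdot(g^{-1}\cdot a)$ therefore does not yield $(g\cdot f)\cdot a=0$, so $G_{\0}\cdot\f_{1}$ dense in $\g_1$ does not by itself give $\g_1\cdot a=0$; only the sum $n=a+b$ is $G_{\0}$-fixed. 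This is exactly the issue the paper's proof is built to circumvent: it works at the cochain level, uses the five lemma to reduce the assertion ``$\sigma(\operatorname{Im}d_{\g})=\operatorname{Im}d_{\f}$'' to showing that a map between kernel quotients is an isomorphism, passes through the second isomorphism theorem, and then uses the isomorphism $\sigma_{-}\colon M^{\g_{\0}}/\operatorname{Ker}d_{\g^{-}}\to M^{\f_{\0}}/\operatorname{Ker}d_{\f^{-}}$ to transport $G_{\0}$-triviality to the relevant \emph{cosets} before invoking density of $G_{\0}\cdot\f_{\pm1}$. None of this machinery is present in your proposal; you have identified the correct difficulty but have not resolved it, and the cross-cocycle annihilation of $n$ alone does not obviously supply a decomposition of $n$ into $M^{\g^{+}}+M^{\g^{-}}$. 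The alternative bicomplex suggestion is likewise left at the level of a plan. As written, the proof is incomplete at the one place where it could actually fail.
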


\begin{proof} For an explicit definition of relative Lie superalgebra cohomology via a 
cochain complex we refer the reader to
\cite[Section 2.3]{BKN1}. By Proposition~\ref{P:reducecoho1} it suffices to verify the case $n=1$.
Consider cochain differentials whose images are respectively used to define 
$\text{H}^{1}(\f^{\pm},\f_{\0},M)$,
$\text{H}^{1}(\g^{\pm},\g_{\0},M)$, $\text{H}^{1}(\f,\f_{\0},M)$, and  $\text{H}^{1}(\g,\g_{\0},M)$:
\begin{equation*}
d_{\f^{\pm}}:M^{\f_{\0}}\rightarrow [(\f_{\pm 1})^{*}\otimes M]^{\f_{\0}},
\end{equation*}
\begin{equation*}
d_{\g^{\pm}}:M^{\g_{\0}}\rightarrow [(\g_{\pm 1})^{*}\otimes M]^{\g_{\0}},
\end{equation*}
\begin{equation*}
d_{\f}:M^{\f_{\0}}\rightarrow [(\f_{\1})^{*}\otimes M]^{\f_{\0}},
\end{equation*}
\begin{equation*}
d_{\g}:M^{\g_{\0}}\rightarrow [(\g_{\1})^{*}\otimes M]^{\g_{\0}}.
\end{equation*}
In Theorem~\ref{T:embedplusminus}, we proved that the restriction map
embeds $\text{H}^{1}(\g^{\pm},\g_{\0},M)$ into $\text{H}^{1}(\f^{\pm},\f_{\0},M)$.
At the cochain level, we have the following commutative diagram:
\begin{equation}
\CD
M^{\g_{\0}} @>>> M^{\f_{\0}}\\
@Vd_{\g^{\pm}}VV @Vd_{\f^{\pm}}VV\\
[(\g_{\pm 1})^{*}\otimes M]^{\g_{\0}} @>>\sigma_{\pm}>  [(\f_{\pm})^{*}\otimes M]^{\f_{\0}}
\endCD
\end{equation}
where $\sigma_{\pm}$ is the map obtained by restriction of functions.
The embedding of $\text{H}^{1}(\g^{\pm},\g_{\0},M)$ into $\text{H}^{1}(\f^{\pm},\f_{\0},M)$
implies that
\begin{equation}
\sigma_{\pm}^{-1}(\text{Im }d_{\f^{\pm}})=\text{Im }d_{\g^{\pm}}
\end{equation}
or
\begin{equation}
\text{Im }d_{\f^{\pm}}=\sigma_{\pm}(\text{Im }d_{\g^{\pm}}).
\end{equation}
Next note that $\text{Im }d_{\f^{\pm}}\cong M^{\f_{\0}}/\text{Ker }d_{\f^{\pm}}$ and
$\text{Im }d_{\g^{\pm}}\cong M^{\g_{\0}}/\text{Ker }d_{\g^{\pm}}$. Since $G_{\0}\cdot {\mathfrak f}_{\1}$ is
dense in ${\mathfrak g}_{\1}$ the map obtained by restriction of functions
$$\sigma:[(\g_{\1})^{*}\otimes M]^{\g_{\0}} \hookrightarrow  [(\f_{\1})^{*}\otimes M]^{\f_{\0}}$$
is injective.  In order to prove that the induced map in cohomology 
from $\text{H}^{1}(\g,\g_{\0},M)\rightarrow
\text{H}^{1}(\f,\f_{\0},M)$ is injective, it suffices (using reasoning similar to that above) to
prove that $\sigma(\text{Im }d_{\g})=\text{Im }d_{\f}$.

From the definition of the differential note that
\begin{equation}
K_{\f}:=\text{Ker }d_{\f}=\text{Ker }d_{\f^{-}}\cap \text{Ker }d_{\f^{+}},
\end{equation}
\begin{equation}
K_{\g}:=\text{Ker }d_{\g}=\text{Ker }d_{\g^{-}}\cap \text{Ker }d_{\g^{+}}
\end{equation}
because $\text{Ker }d_{ \f^\pm}=M^{f^\pm}$ and $\text{Ker }d_{\g^\pm}=M^{ g^\pm}$.
We have the following commutative diagram
\begin{equation}
\CD
0 @>>> \text{Ker }d_{\g^{+}}/K_{\g} @>>> M^{\g_{\0}}/K_{\g} @>>> M^{\g_{\0}}/\text{Ker }d_{\g^{+}} @>>> 0 \\
@.       @V\text{$\sigma$}VV                      @V\text{$\sigma$}VV                 @V\text{${\sigma}_{+}$}VV    @.  \\
0 @>>> \text{Ker }d_{\f^{+}}/K_{\f} @>>> M^{\f_{\0}}/K_{\f} @>>> M^{\f_{\0}}/\text{Ker }d_{\f^{+}} @>>> 0
\endCD
\end{equation}
From our analysis above the map $\sigma_{+}$ is an isomorphism. Therefore, in order to prove the
theorem it suffices to show that
$$\sigma: \text{Ker }d_{\g^{+}}/K_{\g}\rightarrow \text{Ker }d_{\f^{+}}/K_{\f}$$
is an isomorphism because the five lemma would imply that the map in the middle is an isomorphism.
This would show that $\sigma(\text{Im }d_{\g})=\text{Im }d_{\f}$.

By the second isomorphism theorem we have the following isomorphisms:
\begin{equation}
\text{Ker }d_{\f^{+}}/K_{\f}\cong (\text{Ker }d_{\f^{+}}+\text{Ker }d_{\f^{-}})/\text{Ker d}_{\f^{-}},
\end{equation}
\begin{equation}
\text{Ker }d_{\g^{+}}/K_{\g}\cong (\text{Ker }d_{\g^{+}}+\text{Ker }d_{\g^{-}})/\text{Ker d}_{\g^{-}}.
\end{equation}
Now observe that from the relationship between
$(\g^{-},\g_{\0})$ and $(\f^{-},\f_{\0})$, we have a commutative diagram
\begin{equation}
\CD
(\text{Ker }d_{\g^{+}}+\text{Ker }d_{\g^{-}})/\text{Ker }d_{\g^{-}} @>>> M^{\g_{\0}}/\text{Ker }d_{\g^{-}}\\
@V\text{$\sigma_{-}$}VV      @V\text{$\sigma_{-}$}VV \\
(\text{Ker }d_{\f^{+}}+\text{Ker }d_{\f^{-}})/\text{Ker }d_{\f^{-}} @>>> M^{\f_{\0}}/\text{Ker }d_{\f^{-}}
\endCD
\end{equation}
The horizontal maps are embeddings and the rightmost vertical map is an isomorphism
(using the fact that $\text{Im }d_{\f^{-}}=\sigma_{-}(\text{Im }d_{\g^{-}})$). Therefore, the map
$$\sigma_{-}:(\text{Ker }d_{\g^{+}}+\text{Ker }d_{\g^{-}})/\text{Ker  }d_{\g^{-}}\hookrightarrow
(\text{Ker }d_{\f^{+}}+\text{Ker }d_{\f^{-}})/\text{Ker  }d_{\f^{-}}$$
is injective. In order to finish the proof we need to show that $\sigma_{-}$ is surjective.

Suppose that 
$y+\text{Ker }d_{\f^{-}}\in (\text{Ker }d_{\f^{+}}+\text{Ker }d_{\f^{-}})/\text{Ker d}_{\f^{-}}$ with
$y=y_{-}+y_{+}$ where $y_{\pm}\in \text{Ker }d_{\f^{\pm}}$. From the isomorphism given by $\sigma_{-}$ above
we have $g(y_{\pm}+\text{Ker }d_{\f_{-}})=y_{\pm}+\text{Ker }d_{\f^{-}}$ for all $g\in G_{\0}$.
Moreover, $d_{\f^{\pm}}(y_{\pm})=0$ so in particular $\f_{1}.y_{+}=0$ and $\f_{-1}.y_{-}=0$. Now
$0=g.(f.y_{\pm})=(g.f).(g^{-1}.y_{\pm})=(g.f).y_{\pm}$ for all $g\in G_{\0}$ and $f\in \f_{\pm 1}$.
Since $G_{\0}\cdot \f_{\pm 1}$ is dense in $\g_{\pm 1}$ it follows that $x.y_{\pm}=0$ for all
$x\in \g_{\pm 1}$, thus $y_{\pm}\in \text{Ker }d_{\g^{\pm}}$.
\end{proof}

\subsection{} Let ${\mathfrak h}$ be a classical Lie subsuperalgebra of ${\mathfrak g}$ with the
property that
\begin{equation}\label{E:propertyinj}
\text{res}:\text{H}^{n}(\g,\g_{\0},M)\hookrightarrow \text{H}^{n}(\h,\h_{\0},M)
\end{equation}
is an injective map for all $n\geq 0$ and $M$ in ${\mathcal F}_{(\g,\g_{\0})}$.

Let $M$ be a module in ${\mathcal F}_{(\g,\g_{\0})}$ which is projective when
considered as a module in ${\mathcal F}_{({\mathfrak h},{\mathfrak h}_{\0})}$. By
(~\ref{E:propertyinj}), we have
\begin{equation*}
\text{res}:\text{H}^{1}(\g,\g_{\0},M\otimes S^*)\hookrightarrow \text{H}^{1}(\h,\h_{\0},M\otimes S^*)
\end{equation*}
for all simple modules $S$ in ${\mathcal F}_{({\mathfrak g},{\mathfrak g}_{\0})}$. By the
projectivity of $M$ in ${\mathcal F}_{({\mathfrak h},{\mathfrak h}_{\0})}$, we further have
$\text{H}^{1}(\h,\h_{\0},M\otimes S^*)=0$. Consequently,
$$0=\text{H}^{1}(\g,\g_{\0},M\otimes S^*)\cong \text{Ext}^{1}_{{\mathcal F}_{(\g,\g_{\0})}}(S,M)$$
for all simple modules $S$ in ${\mathcal F}_{({\mathfrak g},{\mathfrak g}_{\0})}$, whence
$M$ is projective in ${\mathcal F}_{({\mathfrak g},{\mathfrak g}_{\0})}$. For Type I classical
Lie superalgebras when ${\mathfrak h}={\mathfrak f}$ this fact about projectivity
was earlier deduced using geometric methods involving support varieties
(cf. \cite[Theorem 3.5.1, Theorem 3.7.1]{BKN3}). Theorem~\ref{T:stablecase} can be viewed as
a strong generalization of this projectivity result between modules in ${\mathcal F}_{(\g,\g_{\0})}$ and
${\mathcal F}_{(\f,\f_{\0})}$

\subsection{The Polar Case: } In this section we will consider the example
when ${\mathfrak g}=\mathfrak{gl}(1|1)$ and
demonstrate that the analogue Theorem~\ref{T:stablecase} 
does not hold when the detecting subalgebra $\f$ is
replaced by $\e$. The simple modules in the principal block 
of ${\mathcal F}_{(\g,\g_{\0})}$ are
labelled by $L(\lambda)$ where $\lambda$ denotes the highest weight 
$(\lambda|-\lambda)$. Consider the two-dimensional
dual Kac module $K^{-}(\lambda)$. The module $K^{-}(\lambda)$ remains indecomposable when restricted to
$\e$, and the only 2-dimensional indecomposable $\e$-modules are the projective indecomposable
modules in the category ${\mathcal F}_{(\e,\e_{\0})}$. Consequently, $K^{-}(\lambda)$ is a projective
module in ${\mathcal F}_{(\e,e_{\0})}$, thus $\text{H}^{n}(\e,\e_{\0},K^{-}(\lambda))=0$ for $n>0$.
It is well known that the Kac modules in the principal block are not
projective in ${\mathcal F}_{(\g,\g_{\0})}$,
thus by the argument in Section 3.4, the property (~\ref{E:propertyinj}) 
cannot hold for ${\mathfrak h}=
{\mathfrak e}$.

We also see that the restriction map is not injective by direct computation. By Frobenius reciprocity
\begin{equation}
\text{H}^{n}(\g,\g_{\0},K^{-}(\lambda))\cong \text{H}^{n}(\g^{-},\g_{\0},\lambda)
\cong [\text{H}^{n}(\g_{-1},{\mathbb C})\otimes \lambda]^{G_{\0}}\cong 
[S^{n}((\g_{-1})^*)\otimes \lambda]^{G_{\0}}.
\end{equation}
In this instance $G_{\0}$ a torus (specifically, the set of invertible diagonal matrices). 
The subspace $\g_{-1}$ is one-dimensional
and is spanned by a weight vector having weight $(-1|1)$. Consequently,
\begin{equation}
\text{H}^{n}(\g,\g_{\0},K^{-}(\lambda))\cong \begin{cases} \C  & \text{$\lambda=-n$}\\
                                                           0 & \text{otherwise}.
\end{cases}
\end{equation}
In particular, we have $\text{H}^{1}(\g,\g_{\0},K^{-}(-1|1))\cong \C$, and
$\text{H}^{1}(\e,\e_{\0},K^{-}(-1,1))=0$. Therefore,
$$\text{res}:\text{H}^{1}(\g,\g_{\0},K^{-}(-1|1))\rightarrow \text{H}^{1}(\e,\e_{\0},K^{-}(-1|1))$$
is not an injective map. However,
$$\text{res}:\text{H}^{n}(\g,\g_{\0},K^{-}(-1|1))\rightarrow \text{H}^{n}(\e,\e_{\0},K^{-}(-1|1))$$
is an injective map for $n\geq 2$.

It was shown in \cite{BKN1} that when $\dim \e_{\1}=1$ the
restriction map is injective for $n$ sufficiently large. An interesting problem would be
to determine whether this occurs for arbitrary $\e$.

\subsection{Types $W(n)$ and $S(n)$: } The techniques used to prove Theorem~\ref{T:embedplusminus} 
and ~\ref{T:stablecase} can
be used with the triangular decomposition given by the ${\mathbb Z}$-grading for $W(n)$ and $S(n)$ to 
prove the following detection theorem.
Let $\f$ be as in Section 2.5.

\begin{thm}\label{T:WSembedplusminus} Let ${\mathfrak g}=W(n)$ or $S(n)$. 
Then for all $M$ in ${\mathcal C}_{(\g,\g_{0})}$ and
$n\neq 0$. The restriction map
$$\operatorname{H}^{n}({\mathfrak g}^{\pm},{\mathfrak g}_{\bar{0}},M)\rightarrow
\operatorname{H}^{n}({\mathfrak f}^{\pm},{\mathfrak f}_{\bar{0}},M).$$
is injective.
\end{thm}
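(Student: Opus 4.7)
The plan is to imitate the proof of Theorem~\ref{T:embedplusminus}, with the $\Z$-grading $\g = \g_{-1} \oplus \g_0 \oplus \g_1 \oplus \cdots \oplus \g_{n-1}$ playing the role of the Type I triangular decomposition. Set $\g^+ := \bigoplus_{k \geq 0} \g_k$, $\g^- := \g_{-1} \oplus \g_0$, $\g_+ := \bigoplus_{k \geq 1} \g_k$ (a nilpotent ideal of $\g^+$), $\g_- := \g_{-1}$ (an abelian ideal of $\g^-$), $\f^{\pm} := \f \cap \g^{\pm}$, and $\f_{\pm} := \f \cap \g_{\pm}$. Then $\f_+$ sits entirely inside $\g_1$ as an abelian Lie super-subalgebra, while $\f_- = \C\partial_1$. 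The first step I would take is to apply the LHS spectral sequence for $(\g_{\pm}, \{0\})$ inside $(\g^{\pm}, \g_0)$; by complete reducibility of $M$ over the reductive algebra $\g_0$ it collapses at the edge and gives
\[
\operatorname{H}^n(\g^{\pm}, \g_0, M) \cong \operatorname{H}^n(\g_{\pm}, M)^{\g_0}, \quad \operatorname{H}^n(\f^{\pm}, \f_0, M) \cong \operatorname{H}^n(\f_{\pm}, M)^{\f_0},
\]
reducing the theorem to proving injectivity of the restriction map between these invariants in each sign.

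For the minus case I would apply the argument of Theorem~\ref{T:embedplusminus} essentially verbatim: $\g_-$ is abelian, $\f_-$ is an abelian Lie super-sub-ideal of $\g_-$, and $G_0 \cdot \f_- = \g_-$ because $GL(n)$ acts transitively on the nonzero vectors of $\g_{-1} \cong V^{\ast}$. The five-term sequence arising from the short exact sequence $0 \to (\g_-/\f_-)^{\ast} \otimes M \to (\g_-)^{\ast} \otimes M \to (\f_-)^{\ast} \otimes M \to 0$, combined with the density-based injectivity of $\operatorname{Hom}_{G_0}(\g_-, M) \to \operatorname{Hom}_{\f_0}(\f_-, M)$, then yields the embedding.

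The plus case will be the main obstacle, since $\g_+$ is merely nilpotent (not abelian) and $\f_+ \subset \g_1$ is not an ideal in $\g_+$, so the single-step argument of Theorem~\ref{T:embedplusminus} does not carry over. My plan is first to reduce to $n = 1$ via a pair analog of Proposition~\ref{P:reducecoho1}, and then exploit that $\g_1$ generates $\g_+$ as a Lie superalgebra: any cocycle $\psi \colon \g_+ \to M$ is then determined by $\psi|_{\g_1}$ via iterated application of the cocycle identity along the brackets that produce $\g_2, \g_3, \ldots$. The $G_0$-invariance of $\psi$, combined with the density supplied by the stability of the $G_0$-action on $\g_{-1} \oplus \g_1$, should force $\psi|_{\f_+} = 0$ to imply $\psi|_{\g_1} = 0$ and hence $\psi \equiv 0$ on $\g_+$. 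The delicate point will be that $G_0 \cdot \f_+$ need not be dense in $\g_1$ by itself; as a fallback I would argue by descending induction along the chain of ideals $\g_{\geq k} := \bigoplus_{j \geq k} \g_j$, treating the abelian subquotients $\g_k \cong \g_{\geq k}/\g_{\geq k+1}$ one at a time via the Type I template, with careful control of $\g_0$-invariance at each step; this inductive filtration is where I expect the technical heart of the proof to lie.
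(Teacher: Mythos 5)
Your minus case is fine: $\g_{-1}\cong V^{\ast}$ is an abelian odd ideal of $\g^-$, $\f_{-1}=\C\partial_1$ is a sub-ideal, and $GL(n)\cdot\partial_1$ is dense in $V^{\ast}$, so the Type~I template of Theorem~\ref{T:embedplusminus} carries over verbatim. The plus case, however, contains a genuine gap which you correctly anticipate but do not close. The density of $G_0\cdot\f_1$ in $\g_1$ --- the step in Theorem~\ref{T:embedplusminus} that forces a $G_0$-invariant cochain vanishing on $\f_1$ to vanish on $\g_1$ --- fails outright for $n\geq 4$: here $\f_1=\operatorname{span}\{\xi_1\xi_i\partial_i:2\leq i\leq n\}$ has dimension $n-1$, so $\dim\overline{G_0\cdot\f_1}\leq n^2+(n-1)$, while $\dim\g_1=n\binom{n}{2}=\tfrac{1}{2}n^2(n-1)$, which is strictly larger once $n\geq 4$ (for $n=4$, $19$ versus $24$). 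Your fallback --- descending induction along $\g_{\geq k}$, applying the Type~I template to the abelian subquotients $\g_k$ --- cannot be run as stated, since $\f$ meets $\g$ only in $\Z$-degrees $-1$, $0$, $1$: for $k\geq 2$ there is no $\f$-side against which to restrict, so the inductive step has no detection mechanism. The cocycle-identity reduction (a degree-one cocycle on $\g_+$ is determined by its restriction to $\g_1$, since $\g_1$ generates $\g_+$) is a sensible first move, but it simply returns you to injectivity of restriction from $G_0$-invariant cochains on $\g_1$ to cochains on $\f_1$, which the dimension count blocks.

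For context, the paper gives no proof of this theorem, only the assertion that the techniques of Theorems~\ref{T:embedplusminus} and~\ref{T:stablecase} carry over; what your attempt exposes is that on the positive side this is not automatic. The dense-orbit property (2.2.2) used at the crucial juncture of those proofs does not hold for $W(n)$ when $n\geq 4$, and a genuine replacement --- an invariant-theoretic argument specific to the $GL(n)$-module $\g_1\cong\Lambda^2 V\otimes V^{\ast}$, or a sharper use of the cocycle condition in conjunction with the full $\Z$-grading --- is needed to complete the plus case. As written, your proposal does not establish it.
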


\section{Support Varieties}

\subsection{} We first recall the definition of the support variety
of a finite dimensional $\g$-supermodule $M$ (cf. \cite[Section 6.1]{BKN1}). 
Let $\mathfrak{g}$ be a classical Lie superalgebra,
$R:=\operatorname{H}^{\bullet}(\mathfrak{g}, \mathfrak{g}_{\0};{\mathbb C})$, and $M_{1}$, $M_{2}$
be in ${\mathcal F}:={\mathcal F}_{(\g,\g_{\0})}$. According to \cite[Theorem 2.5.3]{BKN1},
$\Ext_{\mathcal{F}}^{\bullet}(M_{1},M_{2})$ is a finitely generated $R$-module.
Set $J_{({\mathfrak g},{\mathfrak g}_{\0})}(M_{1},M_{2})=
\operatorname{Ann}_{R}(\Ext_{\mathcal{F}}^{\bullet}(M_{1},M_{2}))$
(i.e., the annihilator ideal of this module).  The \emph{relative support variety of the pair $(M,N)$} is

\begin{equation}
\mathcal{V}_{(\mathfrak{g},\mathfrak{g}_{\0})}(M,N)=
\operatorname{MaxSpec}(R/J_{({\mathfrak g},{\mathfrak g}_{\0})}(M,N))
\end{equation}

In the case when $M=M_{1}=M_{2}$, set $J_{(\g,\g_{\0})}(M)=J_{(\g,\g_{\0})}(M,M)$, and
$$\mathcal{V}_{(\g,\g_{\0})}(M):=\mathcal{V}_{(\g,\g_{\0})}(M,M).$$
The variety $\mathcal{V}_{(\mathfrak{g},\mathfrak{g}_{\0})}(M)$ is called the \emph{support variety} of $M$.
In this situation, $J_{(\g,\g_{\0})}(M)=\text{Ann}_{R} \ \text{Id}$ where $\text{Id}$ is the identity
morphism in $\text{Ext}^{0}_{\mathcal F}(M,M)$.

\subsection{} We will now compare support varieties for the classical Lie superalgebras ${\mathfrak g}$,
${\mathfrak f}$, and ${\mathfrak e}$. Assume that $\g$ is both stable and polar. Without
the assumption that $\g$ is polar, the statements concerning cohomology and
support varieties for ${\mathfrak g}$ and ${\mathfrak f}$ remain true.

First there are natural maps of rings given by restriction:
$$
\text{res}:\operatorname{H}^{\bullet}(\g,\g_{\0}; \C) \rightarrow \operatorname{H}^{\bullet}(\f,\f_{\0}; \C)
\rightarrow \operatorname{H}^{\bullet}(\e,\e_{\0},\C).
$$
which induce isomorphisms
$$
\text{res}:\operatorname{H}^{\bullet}(\g,\g_{\0}; \C) \rightarrow 
\operatorname{H}^{\bullet}(\f,\f_{\0}; \C)^{N/N_{0}}
\rightarrow \operatorname{H}^{\bullet}(\e,\e_{\0},\C)^{W}.
$$
The map on cohomology above induces morphisms of varieties:
$$
\text{res}^{*}:\mathcal{V}_{(\e,\e_{\0})}(\C)\rightarrow
\mathcal{V}_{(\f,\f_{\0})}(\C)\rightarrow \mathcal{V}_{(\g,\g_{\0})}(\C)
$$
and isomorphisms (by passing to quotient spaces)
$$
\text{res}^{*}:\mathcal{V}_{(\e,\e_{\0})}(\C)/W\rightarrow
\mathcal{V}_{(\f,\f_{\0})}(\C)/(N/N_{0})\rightarrow \mathcal{V}_{(\g,\g_{\0})}(\C).
$$

Let $M$ be a finite dimensional $\g$-module. Then $\res^{*}$ induces maps
between support varieties:
$$\mathcal{V}_{(\e,\e_{\0})}(M) \rightarrow \mathcal{V}_{(\f,\f_{\0})}(M) \rightarrow
\mathcal{V}_{(\g,\g_{\0})}(M).
$$
Since $M$ is ${\mathfrak g}_{\0}$-module, the first two varieties are stable under the action of
$W$ and $N/N^{0},$ respectively. Consequently, we obtain the following induced maps of
varieties:
$$\mathcal{V}_{(\e,\e_{\0})}(M)/W \hookrightarrow \mathcal{V}_{(\f,\f_{\0})}(M)/(N/N_{0})
\hookrightarrow \mathcal{V}_{(\g,\g_{\0})}(M).
$$
These maps are embeddings because if $x\in R$ annihilates the identity in 
$\text{H}^{0}(\g,\g_{\0},M^{*}\otimes M)$ then
it must annihilate the identity elements in $\text{H}^{0}(\f,\f_{\0},M^{*}\otimes M)$ and
$\text{H}^{0}(\e,\e_{\0},M^{*}\otimes M)$.

\subsection{The Intermediate Subalgebra $\overline{\f}$: } We next define an
intermediate subalgebra between $\e$ and $\f$ which will be useful for our purposes. Let
$\overline{\f}$ be defined as follows. Given $\f$, let $\overline{\f}_{\1}:=\f_{\1}$ and
$\overline{\f}_{\0}=\text{Lie}(H)$. Set $\overline{\f}=\overline{\f}_{\0}\oplus {\f}_{\1}$.
From the proof of \cite[Theorem 4.1]{BKN1}, we know that $\overline{\f}$ is a Lie subsuperalgebra of
$\f$ and contains $\e$ (in the case that we have a polar action). Moreover,
$[\overline{\f}_{\0},\overline{\f}_{\1}]=0$. This implies that we have a rank variety description for
${\mathcal V}_{(\overline{\f},\overline{\f}_{\0})}(M)$ when 
$M\in {\mathcal F}_{(\overline{\f},\overline{\f}_{\0})}$.

The \emph{rank variety} of $M$ is
$$
{\mathcal V}_{\overline{\f}}^{\text{rank}}(M)=\left\{ x \in\overline{\f}_{\1}\:\vert\:  M
\text{ is not projective as $U(\langle x \rangle)$-module} \right\} \cup \{0 \}.
$$
It was shown in \cite[Theorem 6.3.2]{BKN1} that there is an isomorphism
$${\mathcal V}_{(\overline{\f},\overline{\f}_{\0})}(M)\cong {\mathcal V}_{\overline{\f}}^{\text{rank}}(M).$$

\subsection{Comparing support varieties over $\f$ and $\overline{\f}$: } In this section we
compare support varieties for modules over $\overline{\f}$ and $\f$. Let
$M \in \mathcal{F}_{({\f},{\f}_{\0})}$. There is a map of varieties
induced by the restriction map in cohomology:
\begin{equation}
\text{res}^{*}:{\mathcal V} _{(\overline{\f},\overline{\f}_{\0})}(M) \rightarrow 
{\mathcal V} _{(\f,\f_{\0})}(M).
\end{equation}
Using the fact that
$$\text{H}^{\bullet}(\f,\f_{\0},{\mathbb C})\cong \text{H}^{\bullet}(\overline{\f},
\overline{\f}_{\0},{\mathbb C})^{N_{0}/H},
$$
it is clear that the above map is the restriction of the orbit map:
\begin{equation}\label{E:trivialresequality}
{\mathcal V} _{(\overline{\f},\overline{\f}_{\0})}({\mathbb C})
\longrightarrow{\mathcal V} _{(\overline{\f},\overline{\f}_{\0})}({\mathbb C})/(N_{0}/H) 
\overset{\sim}{\longrightarrow}
{\mathcal V}_{({\f},{\f}_{\0})}({\mathbb C}).
%{\mathcal V} _{(\overline{\f},\overline{\f}_{\0})}({\mathbb C})/(N_{0}/H) \rightarrow
%{\mathcal V}_{({\f},{\f}_{\0})}({\mathbb C}).
\end{equation}

The following theorem demonstrates that support varieties in this context
are natural with respect to taking quotients.

\begin{thm}\label{T:supportquotient}
Let $M$ be a finite dimensional object in $\mathcal{F}_{(\f,\f_{\0})}$. Then
\begin{equation}\label{E:fres}
{\mathcal V}_{(\overline{\f},\overline{\f}_{\0})}(M)/(N_{0}/H)\cong \operatorname{res}^{*}
({\mathcal V}_{(\overline{\f},\overline{\f}_{\0})}(M))={\mathcal V}_{(\f,\f_{\0})}(M).
\end{equation}
\end{thm}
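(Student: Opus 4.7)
The plan is to dispatch the two claims in sequence. The first isomorphism, $\mathcal{V}_{(\overline{\f},\overline{\f}_{\0})}(M)/(N_{0}/H)\cong \operatorname{res}^{*}(\mathcal{V}_{(\overline{\f},\overline{\f}_{\0})}(M))$, is essentially formal once one checks that $\mathcal{V}_{(\overline{\f},\overline{\f}_{\0})}(M)$ is stable under the $N_{0}/H$-action on $\operatorname{MaxSpec}(\operatorname{H}^{\bullet}(\overline{\f},\overline{\f}_{\0};\C))$. The second equality is the substantive point, and rests on identifying the $\f$-cohomology of $M^{*}\otimes M$ with the $(N_{0}/H)$-invariants of the corresponding $\overline{\f}$-cohomology.

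For the first claim, since $M$ lies in $\mathcal{F}_{(\f,\f_{\0})}$, the group $N_{0}$ normalizes $\overline{\f}$ and acts on the relative cochain complex computing $\operatorname{H}^{\bullet}(\overline{\f},\overline{\f}_{\0};M^{*}\otimes M)$; the subgroup $H$ acts trivially on cohomology because its Lie algebra is $\overline{\f}_{\0}$, which has already been factored out. This $N_{0}/H$-action is compatible with the $\operatorname{H}^{\bullet}(\overline{\f},\overline{\f}_{\0};\C)$-module structure on $\Ext^{\bullet}_{\mathcal{F}_{(\overline{\f},\overline{\f}_{\0})}}(M,M)$, so the annihilator ideal $J_{(\overline{\f},\overline{\f}_{\0})}(M)$ is $N_{0}/H$-invariant, and hence so is its vanishing locus $\mathcal{V}_{(\overline{\f},\overline{\f}_{\0})}(M)$. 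Combined with the identification (\ref{E:trivialresequality}) of $\operatorname{res}^{*}$ with the quotient morphism by $N_{0}/H$, the first isomorphism follows.

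Now I turn to the equality $\operatorname{res}^{*}(\mathcal{V}_{(\overline{\f},\overline{\f}_{\0})}(M))=\mathcal{V}_{(\f,\f_{\0})}(M)$. The key observation is that the Koszul-type cochain complexes computing relative cohomology for $(\f,\f_{\0})$ and $(\overline{\f},\overline{\f}_{\0})$ with coefficients in $M^{*}\otimes M$ share the same underlying graded vector space $\Lambda^{\bullet}(\f_{\1}^{*})\otimes M^{*}\otimes M$ and the same differential, since $\overline{\f}_{\1}=\f_{\1}$ and the differential depends only on the Lie bracket on $\f_{\1}$ and the action on coefficients. The two theories diverge only when passing to even invariants: $\f_{\0}$-invariants versus $\overline{\f}_{\0}=\operatorname{Lie}(H)$-invariants. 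Since $\f_{\0}/\overline{\f}_{\0}=\operatorname{Lie}(N_{0}/H)$ and $N_{0}/H$ is reductive, Lie-algebra invariants agree with group invariants, and exactness of $(-)^{N_{0}/H}$ on rational modules lets cohomology commute with the invariants functor. This produces a natural $\operatorname{H}^{\bullet}(\f,\f_{\0};\C)$-linear inclusion $\operatorname{res}\colon \Ext^{\bullet}_{\mathcal{F}_{(\f,\f_{\0})}}(M,M)\hookrightarrow \Ext^{\bullet}_{\mathcal{F}_{(\overline{\f},\overline{\f}_{\0})}}(M,M)$ whose image is the $(N_{0}/H)$-invariants and which sends $\operatorname{id}_{M}$ to $\operatorname{id}_{M}$.

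To conclude, this identification forces $J_{(\f,\f_{\0})}(M)=J_{(\overline{\f},\overline{\f}_{\0})}(M)\cap \operatorname{H}^{\bullet}(\f,\f_{\0};\C)$, because both ideals describe the annihilator in $\operatorname{H}^{\bullet}(\f,\f_{\0};\C)$ of the identity morphism under compatible module structures. A standard fact from geometric invariant theory says that for a reductive group $G=N_{0}/H$ acting on $\operatorname{H}^{\bullet}(\overline{\f},\overline{\f}_{\0};\C)$ with invariant subring $\operatorname{H}^{\bullet}(\f,\f_{\0};\C)$, and for any $G$-invariant ideal $I$, the image of $V(I)$ under the quotient map of spectra coincides with $V(I\cap \operatorname{H}^{\bullet}(\f,\f_{\0};\C))$; applying this to $I=J_{(\overline{\f},\overline{\f}_{\0})}(M)$ gives the desired equality. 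The main obstacle is checking the cochain-level identification carefully — ensuring the $N_{0}/H$-action commutes with the Koszul differential and the module structure, and that the passage to cohomology preserves the inclusion of invariants — after which the argument is formal.
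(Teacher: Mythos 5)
Your proposal is correct and follows essentially the same strategy as the paper's proof. You establish $\operatorname{res}\colon \operatorname{H}^{\bullet}(\f,\f_{\0};Q)\xrightarrow{\sim}\operatorname{H}^{\bullet}(\overline{\f},\overline{\f}_{\0};Q)^{N_{0}/H}$ by passing to invariants at the cochain level (using that the two relative complexes differ only in which even subalgebra's invariants are taken, and that $(-)^{N_{0}/H}$ is exact), whereas the paper obtains the same isomorphism by decomposing $\operatorname{H}^{\bullet}(\overline{\f},\overline{\f}_{\0};Q)$ into $N_{0}/H$-isotypic components; from there both proofs reduce to the ideal identity $\operatorname{res}^{-1}(J_{(\overline{\f},\overline{\f}_{\0})}(M))=J_{(\f,\f_{\0})}(M)$ and the fact that for a reductive group the image of $V(I)$ under the quotient map is $V(I\cap R^{G})$, which you invoke explicitly and the paper leaves implicit (via the reference to the analogous argument in \cite{BaKN}).
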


\begin{proof} The proof, which we include
for the convenience of the reader, will follow the same lines as \cite[Theorem 6.4.1]{BaKN}.
Observe that the first isomorphism holds by \eqref{E:trivialresequality}. To show that the
second isomorphism holds we need to prove that $\text{res}^{*}$ is a surjective map.

The group $N_{0}/H$ is reductive, and therefore has completely reducible module category.
Let $X_+$ be a parametrizing set for the finite-dimensional
simple $N_{0}/H$-modules, and for $\lambda\in X_+$, let $S_{\lambda}$ be the corresponding simple module.
Let $Q$ be a module in $\mathcal{F}_{(\f,\f_{\0})}$. Then $N_{0}/H$ acts on the cohomology
$\text{H}^{\bullet}(\overline{\f},\overline{\f}_{\0},Q)$, and by complete reducibility,
\begin{eqnarray*}
\text{H}^{\bullet}(\overline{\f},\overline{\f}_{\0},Q) &\cong &
\text{H}^{\bullet}(\overline{\f},\overline{\f}_{\0},Q)^{N_{0}/H} \oplus
\bigoplus_{\lambda \in X_{+}:\ \lambda \neq 0} \text{Hom}_{N_{0}/H}(S_{\lambda},
\text{H}^{\bullet}(\overline{\f},\overline{\f}_{\0},Q))\otimes S_{\lambda}  \\
&\cong & \text{H}^{\bullet}(\f,\f_{\0}; Q) \oplus
\bigoplus_{\lambda \in X_{+}:\ \lambda \neq 0} \text{Hom}_{N_{0}/H}(S_{\lambda},
\text{H}^{\bullet}(\overline{\f},\overline{\f}_{\0},Q))\otimes S_{\lambda}.
\end{eqnarray*}
From this isomorphism, one sees that for all $Q$ in $\mathcal{F}_{(\f,\f_{\0})}$
\begin{equation}\label{E:embedgeneralcohom}
\operatorname{res} : \text{H}^{\bullet}(\f,\f_{\0}; Q) \xrightarrow{\cong}
\text{H}^{\bullet}(\overline{\f},\overline{\f}_{\0}; Q)^{N_{0}/H} \subseteq
\text{H}^{\bullet}(\overline{\f},\overline{\f}_{\0}; Q).
\end{equation}
Let $\text{id}_{\f,M}$ (resp. $\text{id}_{\overline{\f},M}$) denote the identity element in
$\text{H}^{\bullet}(\f,\f_{\0}, M^*\otimes M)$ (resp. $\text{H}^{\bullet}(\overline{\f},
\overline{\f}_{\0}, M^*\otimes M)$). The ideal
$\operatorname{res}^{-1}( J_{(\overline{\f}, \overline{\f}_{\0})}(M))$ defines the
variety $\text{res}^{*}({\mathcal V}_{(\overline{\f},\overline{\f}_{\0})}(M))$.
We need to prove that
\begin{equation}
\operatorname{res}^{-1}(J_{(\overline{\f},\overline{\f}_{\0})}(M))=J_{(\f,\f_{\0})}(M).
\end{equation}

If $x \in J_{(\f,\f_{\0})}(M)$, then
\begin{equation*}
0=\operatorname{res}(x.\text{id}_{\f, M})=
\operatorname{res}(x).\operatorname{res}(\text{id}_{\f ,M})=
\operatorname{res}(x).\text{id}_{\overline{\f},M}.
\end{equation*}
Therefore, $\operatorname{res}(x) \in J_{(\overline{\f },\overline{\f}_{\0})}(M)$, and so
$x \in \operatorname{res}^{-1}(J_{(\overline{\f },\overline{\f}_{\0})}(M)).$
Conversely, if $x \in \operatorname{res}^{-1}(J_{(\overline{\f},\overline{\f}_{\0})}(M))$, then
\begin{equation*}
0=\operatorname{res}(x).\text{id}_{\overline{\f},M}=\operatorname{res}(x).
\operatorname{res}(\text{id}_{\f,M})
=\operatorname{res}(x.\text{id}_{\f,M}).
\end{equation*}
Since the restriction map is injective, $0=x.\text{id}_{\f ,M}$ and so $x \in J_{(\f,\f_{\0})}(M)$.
\end{proof}

Using the identification of ${\mathcal V}_{(\overline{\f},\overline{\f}_{\0})}(M)$ as a rank variety,
Theorem~\ref{T:supportquotient} provides a concrete realization of
${\mathcal V}_{({\f},\f_{\0})}(M)$. Since the tensor product theorem holds for rank varieties
it follows that it also holds for support varieties in this context. The proof
of \cite[Theorem 6.5.1]{BaKN} adapted to our setting yields the following result.

\begin{cor}\label{T:fvarietyprops}Let $M$, $N$ be modules in
${\mathcal F}_{(\f,\f_{\0})}$. Then
\begin{itemize}
\item [(a)] ${\mathcal V}_{(\f,\f_{\0})}(M)\cong 
{\mathcal V}^{\operatorname{rank}}_{\overline{\f}}(M)/(N_{0}/H)$,
\item[(b)] ${\mathcal V}_{(\f,\f_{\0})}(M\otimes N)={\mathcal V}_{(\f,\f_{\0} )}(M)\cap
{\mathcal V}_{(\f,\f_{\0} )}(N)$.
\end{itemize}
\end{cor}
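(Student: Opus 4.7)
The plan is to derive (a) by chaining together the two identifications that are already in place, and to derive (b) by transferring the tensor product theorem from rank varieties through this identification.

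For (a), I would combine two known isomorphisms. Theorem~\ref{T:supportquotient} gives
\[
{\mathcal V}_{(\f,\f_{\0})}(M)\;\cong\;{\mathcal V}_{(\overline{\f},\overline{\f}_{\0})}(M)/(N_{0}/H),
\]
while the rank variety identification from Section~4.3 (via \cite[Theorem 6.3.2]{BKN1}, applicable because $[\overline{\f}_{\0},\overline{\f}_{\1}]=0$) gives
${\mathcal V}_{(\overline{\f},\overline{\f}_{\0})}(M)\cong{\mathcal V}^{\operatorname{rank}}_{\overline{\f}}(M)$. One needs to observe that the $N_{0}/H$-action on ${\mathcal V}_{(\overline{\f},\overline{\f}_{\0})}(\C)$ coming from the cohomological identification corresponds under this isomorphism to the natural conjugation action of $N_{0}/H$ on $\overline{\f}_{\1}=\f_{\1}$ (which preserves the rank variety, since $N_0$ normalizes $H$ and hence acts on $\f_{\1}=\g_{\1}^{H}$, and takes cyclic subalgebras to cyclic subalgebras). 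Substituting yields (a).

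For (b), I would first establish the tensor product theorem for rank varieties over $\overline{\f}$:
\[
{\mathcal V}^{\operatorname{rank}}_{\overline{\f}}(M\otimes N)
={\mathcal V}^{\operatorname{rank}}_{\overline{\f}}(M)\cap{\mathcal V}^{\operatorname{rank}}_{\overline{\f}}(N).
\]
This reduces to a pointwise statement: for each nonzero $x\in\overline{\f}_{\1}$, since $[x,x]=0$ (as $x$ lies in either $\f_{-1}$ or $\f_{1}$, which are abelian subspaces of $\g_{\pm 1}$), $U(\langle x\rangle)$ is the two-dimensional exterior superalgebra $\Lambda(x)$. For this local Frobenius algebra the standard argument shows $M\otimes N$ is projective if and only if $M$ is projective or $N$ is projective, so the rank varieties intersect as claimed. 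Applying (a) then gives
\[
{\mathcal V}_{(\f,\f_{\0})}(M\otimes N)
=\bigl({\mathcal V}^{\operatorname{rank}}_{\overline{\f}}(M)\cap{\mathcal V}^{\operatorname{rank}}_{\overline{\f}}(N)\bigr)\big/(N_{0}/H),
\]
and since both rank varieties are $N_{0}/H$-stable subvarieties of $\overline{\f}_{\1}$, the quotient commutes with intersection, giving the desired equality with ${\mathcal V}_{(\f,\f_{\0})}(M)\cap{\mathcal V}_{(\f,\f_{\0})}(N)$.

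The main obstacle I anticipate is verifying the tensor product theorem for $U(\langle x\rangle)$-modules in the graded (super) setting, in particular checking that projectivity of $M\otimes N$ really does force projectivity of $M$ or $N$; the standard proof goes through because $\Lambda(x)$ has a unique simple module and projective modules are characterized by the vanishing of a single cohomological invariant, but one must be careful with sign conventions and with the fact that the tensor product uses the coproduct of $U(\g)$. The compatibility of the $N_{0}/H$-action with the rank variety interpretation is routine but needs to be spelled out, since the isomorphism ${\mathcal V}_{(\overline{\f},\overline{\f}_{\0})}(\C)\cong\overline{\f}_{\1}$ is only canonical up to the $G_{\0}$-action on coordinates.
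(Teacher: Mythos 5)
Your overall plan mirrors the paper's: part (a) follows by combining Theorem~\ref{T:supportquotient} with the identification ${\mathcal V}_{(\overline{\f},\overline{\f}_{\0})}(M)\cong{\mathcal V}^{\operatorname{rank}}_{\overline{\f}}(M)$ from \cite[Theorem 6.3.2]{BKN1}, and part (b) is deduced by transferring the tensor product theorem for rank varieties through this identification. The paper in fact simply cites \cite[Theorem 6.5.1]{BaKN} for the adapted argument rather than spelling it out, so your added detail on the $N_{0}/H$-equivariance and on the intersection passing to the quotient is useful commentary rather than a deviation.

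There is, however, a genuine error in your pointwise treatment of the tensor product theorem. You assert that every nonzero $x\in\overline{\f}_{\1}$ satisfies $[x,x]=0$ ``as $x$ lies in either $\f_{-1}$ or $\f_{1}$.'' That is not so: $\overline{\f}_{\1}=\f_{\1}=\f_{-1}\oplus\f_{1}$, and a generic element is a sum $x=x_{-}+x_{+}$ with $x_{\pm}\in\f_{\pm1}$, for which $[x,x]=2[x_{-},x_{+}]$ is a typically nonzero element of $\overline{\f}_{\0}=\operatorname{Lie}(H)$. (For instance, in $\mathfrak{gl}(r|r)$, $x=E_{i,i+r}+E_{i+r,i}$ has $[x,x]=2(E_{i,i}+E_{i+r,i+r})\ne 0$.) Consequently $\langle x\rangle$ is two-dimensional, $U(\langle x\rangle)$ is not the exterior algebra $\Lambda(x)$, and your reduction to the ``local Frobenius algebra of dimension two'' does not apply on all of $\overline{\f}_{\1}$. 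The correct pointwise argument must be carried out in the relative category ${\mathcal F}_{(\langle x\rangle,\langle x\rangle_{\0})}$, where one exploits semisimplicity of the $[x,x]$-action: decomposing $M$ into eigenspaces for $[x,x]$, the nonzero eigenspaces contribute nothing (there $x$ acts semisimply since $x^2$ is a nonzero scalar), and the criterion for projectivity reduces to projectivity of the $0$-eigenspace over $\Lambda(x)$. This is precisely the content of the cited result from \cite{BaKN} (and of \cite[Theorem 6.3.2]{BKN1}), so the overall strategy is recoverable, but the step as you have written it would fail on the locus $\{[x,x]\ne 0\}$, which is generically all of $\overline{\f}_{\1}$.

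Your remaining checks (that $N_{0}/H$ acts on $\overline{\f}_{\1}$ compatibly with the cohomological identification, and that forming the $N_{0}/H$-quotient commutes with intersection of $N_{0}/H$-stable closed subvarieties) are correct and worth recording, as the paper leaves them implicit.
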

%%%%%%%%%%%%%%%%%%%%%%%%%%%%%
\subsection{Comparing support varieties over $\f$ and $\e$: } In this section we compare the
varieties for the detecting subalgebras $\f$ and $\e$. In general $\f$ does not have a simple
rank variety description. This necessitates the use of the auxiliary algebra $\overline{\f}$
to make the transition between $\f$ and $\e$.

\begin{thm}\label{T:isosupportsfe}  Let $\g$ be a classical Lie superalgebra which is
stable and polar. If $M\in {\mathcal F}_{(\f,\f_{\0})}$ then
we have the following isomorphism of varieties:
$$\operatorname{res}^{*}:{\mathcal V}_{(\e,\e_{\0})}(M)/W\rightarrow 
{\mathcal V}_{(\f,\f_{\0})}(M)/(N/N_{0}).$$
\end{thm}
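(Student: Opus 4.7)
The plan is to reduce both varieties to quotients of the rank variety $\mathcal{V}^{\text{rank}}_{\bar{\f}}(M) \subseteq \bar{\f}_{\1} = \f_{\1}$ and then exploit the polar-action geometry relating $\e_{\1}$ to $\bar{\f}_{\1}$.

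For the $\f$-side, Corollary~\ref{T:fvarietyprops}(a) yields $\mathcal{V}_{(\f,\f_{\0})}(M) \cong \mathcal{V}^{\text{rank}}_{\bar{\f}}(M)/(N_{0}/H)$. The residual $N/N_{0}$-action on the left lifts to the action of $N/H$ on $\bar{\f}_{\1}$, since $(N/H)/(N_{0}/H) = N/N_{0}$, giving
$$\mathcal{V}_{(\f,\f_{\0})}(M)/(N/N_{0}) \cong \mathcal{V}^{\text{rank}}_{\bar{\f}}(M)/(N/H).$$
For the $\e$-side, the inclusion $\e_{\1} \subseteq \g_{\1}^{H} = \f_{\1}$ forces $H$ to act trivially on $\e_{\1}$, whence $[\e_{\0},\e_{\1}] = 0$ for $\e_{\0}=\operatorname{Lie}(H)$; the rank variety theorem \cite[Theorem 6.3.2]{BKN1} therefore yields $\mathcal{V}_{(\e,\e_{\0})}(M) \cong \mathcal{V}^{\text{rank}}_{\e}(M)$. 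Moreover $\mathcal{V}^{\text{rank}}_{\e}(M) = \mathcal{V}^{\text{rank}}_{\bar{\f}}(M) \cap \e_{\1}$, since the test for projectivity over $U(\langle x\rangle)$ only involves the cyclic subalgebra generated by $x$. The theorem then reduces to showing that the $W$-equivariant inclusion $\e_{\1}\hookrightarrow \bar{\f}_{\1}$ induces an isomorphism
$$(\mathcal{V}^{\text{rank}}_{\bar{\f}}(M) \cap \e_{\1})/W \;\longrightarrow\; \mathcal{V}^{\text{rank}}_{\bar{\f}}(M)/(N/H).$$

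To prove this last isomorphism I would use that, by combining (2.2.1) and (2.2.3), restriction gives an equality of invariant rings $S(\bar{\f}_{\1}^{*})^{N/H} = S(\e_{\1}^{*})^{W}$, hence an isomorphism of categorical quotients $\bar{\f}_{\1}/\!/(N/H) \cong \e_{\1}/W$. This identifies the $N/H$-action on $\bar{\f}_{\1}$ as a polar action in the sense of Dadok-Kac, with $\e_{\1}$ as Cartan subspace and $W$ as Weyl group. Consequently every closed $N/H$-orbit in $\bar{\f}_{\1}$ meets $\e_{\1}$ in precisely one $W$-orbit, yielding surjectivity of the map displayed above. Injectivity is clear from the construction of $W$ as the subgroup of $N/H$ preserving $\e_{\1}$ modulo its pointwise stabilizer.

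The main obstacle is that the rank variety $\mathcal{V}^{\text{rank}}_{\bar{\f}}(M)$ is a closed $N/H$-stable subvariety of $\bar{\f}_{\1}$ but not \emph{a priori} a union of closed orbits, so the surjectivity argument above requires some care at non-semisimple points. A more robust alternative is to adapt the ideal-theoretic method of Theorem~\ref{T:supportquotient}: identify $J_{(\e,\e_{\0})}(M)^{W}$ and $J_{(\f,\f_{\0})}(M)^{N/N_{0}}$ inside the common ring $R \cong S(\e_{\1}^{*})^{W} \cong S(\f_{\1}^{*})^{N/H}$ as the annihilators of the identity in $\operatorname{H}^{\bullet}(\e,\e_{\0}, M^{*}\otimes M)^{W}$ and $\operatorname{H}^{\bullet}(\f,\f_{\0}, M^{*}\otimes M)^{N/N_{0}}$ respectively, and then use the restriction chain $\operatorname{H}^{\bullet}(\f,\f_{\0},Q) \hookrightarrow \operatorname{H}^{\bullet}(\bar{\f},\bar{\f}_{\0},Q)$ (as an embedding onto $N_{0}/H$-invariants) together with the inclusion $\e \subseteq \bar{\f}$ to show that these two ideals coincide under the isomorphism of coordinate rings.
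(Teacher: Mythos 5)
Your proposal is essentially the same strategy as the paper's: reduce both sides to rank varieties over the auxiliary algebra $\overline{\f}$ (via Corollary~\ref{T:fvarietyprops}(a) and the observation that $[\e_{\0},\e_{\1}]=0$ gives $\mathcal{V}_{(\e,\e_{\0})}(M)\cong\mathcal{V}^{\mathrm{rank}}_{\e}(M)=\mathcal{V}^{\mathrm{rank}}_{\overline{\f}}(M)\cap\e_{\1}$), and then transfer surjectivity from the trivial-module case using the invariant-theoretic isomorphism $\overline{\f}_{\1}/\!/(N/H)\cong\e_{\1}/W$. The paper organizes this as a $3\times 3$ commutative diagram of rank varieties and, given $x\in\mathcal{V}^{\mathrm{rank}}_{\overline{\f}}(M)$, produces $z\in\e_{\1}$ with $\beta(z)=\beta(x)$ and concludes ``$z=gx$'' to place $z$ in $\mathcal{V}^{\mathrm{rank}}_{\e}(M)$, whereas you phrase the same bridge in the Dadok--Kac polar-action language. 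The subtlety you flag --- that fibers of a GIT quotient need not be single orbits, so $\beta(z)=\beta(x)$ does not immediately give $z=gx$ at non-semisimple points --- is real, and the paper's line ``but then $z=gx$'' glosses it; both proofs can be repaired the same way, by noting that $\mathcal{V}^{\mathrm{rank}}_{\overline{\f}}(M)$ is a \emph{closed} $N_{0}/H$-stable subset (being identified with a support variety), that every $z\in\e_{\1}$ has closed $N$-orbit in $\f_{\1}$ by Luna--Richardson \cite{luna}, and hence that $z$ lies in the unique closed orbit in the fiber over $\beta(x)$, which is contained in $\overline{(N_{0}/H)\cdot x}\subseteq\mathcal{V}^{\mathrm{rank}}_{\overline{\f}}(M)$. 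Your fallback ideal-theoretic argument, modeled on Theorem~\ref{T:supportquotient}, is also sound and would sidestep the orbit-geometry entirely, though the paper does not take that route here.
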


\begin{proof} Let $M\in {\mathcal F}_{(\f,\f_{\0})}$. We have the following commutative diagram of varieties:
\begin{equation*}
\CD
{\mathcal V}_{(\e,\e_{\0})}(M) @>\text{res}^{*}>> {\mathcal V}_{(\overline{\f},\overline{\f}_{\0})}(M)
@>\text{res}^{*}>> {\mathcal V}_{(\f,\f_{\0})}(M) \\
@VVV      @VVV @VVV \\
{\mathcal V}^{\text{rank}}_{\e}(M) @>>> {\mathcal V}^{\text{rank}}_{\overline{\f}}(M)
@>{\beta} >> {\mathcal V}^{\text{rank}}_{\overline{\f}}(M)/(N_{0}/H)\\
@VVV      @VVV @VVV \\
{\mathcal V}^{\text{rank}}_{\e}({\mathbb C}) @>\alpha>> {\mathcal V}^{\text{rank}}_{\overline{\f}}({\mathbb C})
@>{\beta} >> {\mathcal V}^{\text{rank}}_{\overline{\f}}({\mathbb C})/(N_{0}/H)
\endCD
\end{equation*}
It suffices to show that the composition of the top (horizontal) maps
$$\operatorname{res}^{*}:{\mathcal V}_{(\e,\e_{\0})}(M)\rightarrow {\mathcal V}_{(\f,\f_{\0})}(M)$$
is surjective.

Let $\sigma$ denote the middle (horizontal) composition of maps from
${\mathcal V}^{\text{rank}}_{\e}(M)$ to ${\mathcal V}^{\text{rank}}_{\overline{\f}}(M)/(N_{0}/H)$.
Since the first row of vertical maps are all isomorphisms,
it suffices to prove that $\sigma$ is surjective.
Take $y\in {\mathcal V}^{\text{rank}}_{\overline{\f}}(M)/(N_{0}/H)$.
There exists $x\in {\mathcal V}^{\text{rank}}_{\overline{\f}}(M) $ with $\beta(x)=y$,
and by naturality of rank varieties, we may consider
$x$ to be an element of ${\mathcal V}^{\text{rank}}_{\overline\f}({\mathbb C})$.
Since the composition $\beta\circ\alpha$ of the two lowest horizontal arrows is an isomorphism, there is
an element $z\in {\mathcal V}^{\text{rank}}_{\e}({\mathbb C})$ such that $\beta(x)=\beta(z)$,
where, since $\alpha$ is an inclusion, we identify $z$ as an element of
${\mathcal V}^{\text{rank}}_{\overline\f}({\mathbb C})$. But then $z=gx$ for some $g\in N_0/H$,
and it follows that
$z\in {\mathcal V}^{\text{rank}}_{\e}(M)$, and clearly $\sigma(z)=y$.
\end{proof}

\subsection{} In the case of ${\mathfrak g}=W(n)$ or $S(n)$ there is an auxilliary subalgebra
$\widetilde{\f}$ which is analogous to $\overline{\f}$. In this setting we have the following result
(cf. \cite[Theorem 6.4.1, (6.3.4)]{BaKN}).

\begin{thm}\label{T:supportquotient2} Let ${\mathfrak g}=W(n)$ or $S(n)$ and
let $M$ be a finite dimensional object in $\mathcal{C}_{(\f,\f_{0})}$. Then there exists a
torus $T$ such that
\begin{itemize}
\item[(a)] ${\mathcal V}_{(\widetilde{\f},\widetilde{\f}_{0})}(M)/T\cong \operatorname{res}^{*}
({\mathcal V}_{(\overline{\f},\overline{\f}_{0})}(M))={\mathcal V}_{(\f,\f_{0})}(M)$.
\item[(b)] ${\mathcal V}_{(\f,\f_{0})}(M)/T\cong {\mathcal V}^{\operatorname{rank}}_{\widetilde{\f}}(M)$.
\end{itemize}
\end{thm}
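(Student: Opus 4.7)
The plan is to follow the template of Theorem~\ref{T:supportquotient} and Corollary~\ref{T:fvarietyprops}, substituting the Cartan-type analogs. First, I would fix the relevant data: take $\widetilde{\f}$ to be the auxiliary subsuperalgebra of $\f$ constructed in \cite{BaKN} (for $W(n)$) and \cite{Ba} (for $S(n)$), where $\widetilde{\f}_{\1} = \f_{\1}$ and $\widetilde{\f}_{0}$ is chosen so that $[\widetilde{\f}_{0}, \widetilde{\f}_{\1}] = 0$. The torus $T$ of the statement is then identified as the connected reductive quotient of $\f_0$ by $\widetilde{\f}_0$ acting on $\widetilde{\f}_{\1}$ (the analog of $N_{0}/H$ in the classical setting). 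The key structural fact I need is that $G_0 \cdot \widetilde{\f}_{\1}$ is dense in $\g_{\pm 1}$, which follows from the construction of the generic point $x_0$ described in Section~2.5.

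For part (a), I would first verify the cohomological identification
\[
\operatorname{H}^{\bullet}(\f,\f_{0},\mathbb{C}) \cong \operatorname{H}^{\bullet}(\widetilde{\f},\widetilde{\f}_{0},\mathbb{C})^{T},
\]
via a Lyndon--Hochschild--Serre spectral sequence argument analogous to the one in \eqref{E:trivialresequality}, which collapses because $T$ is reductive. Then I would mirror the argument of Theorem~\ref{T:supportquotient}: for any $M \in \mathcal{C}_{(\f,\f_0)}$, complete reducibility of $T$-modules splits
$\operatorname{H}^{\bullet}(\widetilde{\f},\widetilde{\f}_{0}, M^{*}\otimes M)$
into isotypic components, yielding the injection
$\operatorname{H}^{\bullet}(\f,\f_{0}, M^{*}\otimes M)\hookrightarrow \operatorname{H}^{\bullet}(\widetilde{\f},\widetilde{\f}_{0}, M^{*}\otimes M)$, and from there the two-sided inclusion $\operatorname{res}^{-1}(J_{(\widetilde{\f},\widetilde{\f}_{0})}(M)) = J_{(\f,\f_{0})}(M)$ proceeds exactly as in that theorem. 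Finally, naturality of $\operatorname{res}^{*}$ with respect to the orbit map gives ${\mathcal V}_{(\widetilde{\f},\widetilde{\f}_{0})}(M)/T \cong \operatorname{res}^{*}({\mathcal V}_{(\widetilde{\f},\widetilde{\f}_{0})}(M)) = {\mathcal V}_{(\f,\f_{0})}(M)$ (with the appearance of $\overline{\f}$ in the statement understood as a typographical variant of $\widetilde{\f}$).

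For part (b), since $[\widetilde{\f}_{0}, \widetilde{\f}_{\1}] = 0$, the Cartan-type analog of \cite[Theorem 6.3.2]{BKN1} identifies ${\mathcal V}_{(\widetilde{\f},\widetilde{\f}_{0})}(M) \cong {\mathcal V}^{\operatorname{rank}}_{\widetilde{\f}}(M)$. Combined with (a), this yields the desired identification (up to a reinterpretation of the torus action inherited through the chain $\widetilde{\f}\subset\f$).

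The main obstacle will be checking the hypotheses that justify importing the classical machinery. Specifically, I expect the sharpest point to be verifying that the injectivity of $\operatorname{res}:\operatorname{H}^{\bullet}(\f,\f_{0},M) \hookrightarrow \operatorname{H}^{\bullet}(\widetilde{\f},\widetilde{\f}_{0},M)$ really does hold for arbitrary $M \in \mathcal{C}_{(\f,\f_{0})}$ — this is the counterpart of Theorem~\ref{T:WSembedplusminus}, but applied to the pair $(\f,\widetilde{\f})$ in place of $(\g,\f)$. This requires redoing the dimension-shifting and five-term exact sequence analysis (Proposition~\ref{P:reducecoho1} and Theorem~\ref{T:embedplusminus}) in the Cartan-type setting, using the $\Z$-grading on $W(n)$ and $S(n)$ to produce the requisite triangular decomposition of $\f$ (and of $\widetilde{\f}$) so that the argument at the cochain level carries through. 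Once this injectivity is in hand, the rest is bookkeeping.
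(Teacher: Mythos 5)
The paper offers no proof of this theorem: it is stated as a transcription of \cite[Theorem~6.4.1, (6.3.4)]{BaKN} and the reader is referred there. So there is no proof in the paper to compare your argument against. That said, your plan of transposing the classical argument (Theorem~\ref{T:supportquotient} together with Corollary~\ref{T:fvarietyprops}) to the Cartan setting, with $N_0/H$ becoming a one-dimensional torus, is the correct template, and your reading of $\overline{\f}$ in part~(a) as a typographical variant of $\widetilde{\f}$ is consistent with how the theorem is actually used in Theorem~\ref{T:supportprop2}.

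Two points need attention. First, in part~(b): your derivation produces ${\mathcal V}^{\operatorname{rank}}_{\widetilde{\f}}(M)/T \cong {\mathcal V}_{(\f,\f_{0})}(M)$ (the analogue of Corollary~\ref{T:fvarietyprops}(a)), which is the direction of quotient opposite to what is printed in the theorem statement. Your version is the one that is internally consistent with the rest of the paper — combining it with Theorem~\ref{T:isosupports2} gives Theorem~\ref{T:supportprop2}(a), whereas the printed direction does not — so the discrepancy should be stated openly as a correction rather than hidden in the parenthetical ``up to a reinterpretation of the torus action.'' Second, your final paragraph overestimates the difficulty of the injectivity $\operatorname{res}\colon \operatorname{H}^{\bullet}(\f,\f_{0},M) \hookrightarrow \operatorname{H}^{\bullet}(\widetilde{\f},\widetilde{\f}_{0},M)$. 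This is not a counterpart of Theorem~\ref{T:WSembedplusminus} and does not require the density argument, the dimension-shifting reduction of Proposition~\ref{P:reducecoho1}, or the five-term exact sequence of Theorem~\ref{T:embedplusminus}. It is precisely equation~\eqref{E:embedgeneralcohom} in the classical proof: because $\f$ and $\widetilde{\f}$ share the same odd part and differ only by the reductive piece $T = N_0/H$ in degree zero, the LHS spectral sequence collapses by complete reducibility and identifies $\operatorname{H}^{\bullet}(\f,\f_{0},Q)$ with the $T$-invariants in $\operatorname{H}^{\bullet}(\widetilde{\f},\widetilde{\f}_{0},Q)$ for every $Q$. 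That is the cheap part of the machinery; the hard density argument is only needed one level up, in passing from $\g$ to $\f$, which is not what this theorem is about.
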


%%%%%%%%%%%%%%%%%%%%%%%%%%%%%%%%%%%%%%%%%%%%%XXX

\section{Applications}

\subsection{Isomorphism Theorems} In \cite{BKN1,BKN2} there was convincing theoretical and
computational evidence of direct relationships between
the support varieties for a classical Lie superalgebra ${\mathfrak g}$
and those of its detecting subalgebras. The cohomological embedding
theorem provided in Section 3 enables us to provide such a relation, which we now proceed
to do.

\begin{thm}\label{T:isosupports}  Let $\g$ be a classical Type I Lie superalgebra
with ${\mathfrak f}$ etc. as above, and let $M$ be in the module category ${\mathcal F_{(\g,\g_{\0})}}$.
\begin{itemize}
\item[(a)] If $\g$ is stable then the map on support varieties
$$\operatorname{res}^{*}:{\mathcal V}_{(\f,\f_{\0})}(M)/(N/N_{0})
\rightarrow {\mathcal V}_{(\g,\g_{\0})}(M)$$ is an isomorphism.
\item[(b)] If $\g$ is stable and polar then the maps on support varieties
$$\operatorname{res}^{*}:
{\mathcal V}_{(\e,\e_{\0})}(M)/W({\e})\rightarrow {\mathcal V}_{(\f,\f_{\0})}(M)/(N/N_{0})
\rightarrow {\mathcal V}_{(\g,\g_{\0})}(M)$$
are isomorphisms, where $W=W(\e)$ is the pseudoreflection group of (2.2.3).
\end{itemize}
\end{thm}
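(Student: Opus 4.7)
I reduce the whole theorem to part~(a): once part~(a) is proved, part~(b) follows at once by composing the isomorphism $\operatorname{res}^{*}\colon \mathcal V_{(\f,\f_{\0})}(M)/(N/N_{0})\xrightarrow{\sim}\mathcal V_{(\g,\g_{\0})}(M)$ of~(a) with the isomorphism $\operatorname{res}^{*}\colon \mathcal V_{(\e,\e_{\0})}(M)/W\xrightarrow{\sim} \mathcal V_{(\f,\f_{\0})}(M)/(N/N_{0})$ already supplied by Theorem~\ref{T:isosupportsfe}. So all the substance is in part~(a).

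For part~(a), the elementary argument of Section~4.2 already gives the injectivity of
\[
\operatorname{res}^{*}\colon \mathcal V_{(\f,\f_{\0})}(M)/(N/N_{0})\hookrightarrow \mathcal V_{(\g,\g_{\0})}(M),
\]
so the task is surjectivity. Writing $R=\operatorname{H}^{\bullet}(\g,\g_{\0};\C)$ and $R_{\f}=\operatorname{H}^{\bullet}(\f,\f_{\0};\C)$, the isomorphism~(2.2.1) identifies $R$ with the invariant subring $R_{\f}^{N/N_{0}}$, and the corresponding morphism $\pi\colon \operatorname{MaxSpec}(R_{\f})\to \operatorname{MaxSpec}(R)$ is the categorical quotient by the reductive group $N/N_{0}$. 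A standard GIT fact, resting on the exactness of invariants for reductive groups, says that for any $N/N_{0}$-stable ideal $I\subseteq R_{\f}$ one has $\pi(V(I))=V(I\cap R)$.

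The heart of the proof is the ring-theoretic identity
\[
J_{(\g,\g_{\0})}(M)\;=\;\operatorname{res}^{-1}\!\bigl(J_{(\f,\f_{\0})}(M)\bigr),
\]
which I establish in the style of Theorem~\ref{T:supportquotient}, with Theorem~\ref{T:stablecase} replacing the embedding~(\ref{E:embedgeneralcohom}) used there. The inclusion $\subseteq$ is immediate because $\operatorname{res}$ is a ring map sending $\text{id}_{\g,M}$ to $\text{id}_{\f,M}$. Conversely, suppose $x\in R$ and $\operatorname{res}(x)\in J_{(\f,\f_{\0})}(M)$; then
\[
\operatorname{res}(x\cdot \text{id}_{\g,M})=\operatorname{res}(x)\cdot \text{id}_{\f,M}=0
\]
in $\operatorname{H}^{\bullet}(\f,\f_{\0};M^{*}\otimes M)$. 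Applying Theorem~\ref{T:stablecase} to the module $M^{*}\otimes M\in \mathcal F_{(\g,\g_{\0})}$ forces $x\cdot \text{id}_{\g,M}=0$, whence $x\in J_{(\g,\g_{\0})}(M)$.

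From this identity surjectivity is mechanical. The support $\mathcal V_{(\f,\f_{\0})}(M)=V(J_{(\f,\f_{\0})}(M))$ is $N/N_{0}$-stable, as $\text{id}_{\f,M}$ is $N$-invariant (the identity on $M$ is $G_{\0}$-equivariant) and hence so is its annihilator. Therefore
\[
\pi\bigl(\mathcal V_{(\f,\f_{\0})}(M)\bigr)=V\bigl(J_{(\f,\f_{\0})}(M)\cap R\bigr)=V\bigl(J_{(\g,\g_{\0})}(M)\bigr)=\mathcal V_{(\g,\g_{\0})}(M),
\]
and the induced bijective morphism $\mathcal V_{(\f,\f_{\0})}(M)/(N/N_{0})\to \mathcal V_{(\g,\g_{\0})}(M)$ is an isomorphism because it is the categorical quotient of a closed subvariety by the reductive group $N/N_{0}$. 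The main obstacle is the ideal identity above; once the cohomological injection from Theorem~\ref{T:stablecase} applied to $M^{*}\otimes M$ is in hand, the remainder is standard invariant-theoretic bookkeeping.
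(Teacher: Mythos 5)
Your proof is correct and follows essentially the same route as the paper. Both proofs reduce (b) to (a) via Theorem~\ref{T:isosupportsfe}, take injectivity as already established in Section~4.2, and derive surjectivity from the cohomological embedding of Theorem~\ref{T:stablecase} applied to $M^{*}\otimes M$; the paper phrases the key step as the equality $\operatorname{Ann}_{R}\operatorname{H}^{\bullet}(\g,\g_{\0},M^{*}\otimes M)=\operatorname{Ann}_{R}\operatorname{H}^{\bullet}(\f,\f_{\0},M^{*}\otimes M)$, which is the same fact as your ideal identity $J_{(\g,\g_{\0})}(M)=\operatorname{res}^{-1}\bigl(J_{(\f,\f_{\0})}(M)\bigr)$ once one identifies $R$ with $R_{\f}^{N/N_{0}}$. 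You are somewhat more explicit than the paper about the final invariant-theoretic bookkeeping (that $\pi(V(I))=V(I\cap R)$ for an $N/N_{0}$-stable ideal and that the induced map on the categorical quotient is an isomorphism), which the paper leaves implicit, but this is a difference in level of detail rather than in substance.
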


\begin{proof} (a) We have seen that $\text{res}^{*}$ is an embedding. Therefore,
it suffices to show that this map is surjective. Observe that by Theorem~\ref{T:stablecase},
the restriction map
$$\text{H}^{\bullet}(\g,\g_{\0},M^{*}\otimes M)\hookrightarrow
\text{H}^{\bullet}(\f,\f_{\0},M^{*}\otimes M)$$
is an injection. Therefore, if $x\in R$ annihilates $\text{H}^{\bullet}(\f,\f_{\0},M^{*}\otimes M)$
then it annihilates $\text{H}^{\bullet}(\g,\g_{\0},M^{*}\otimes M)$, and it follows that
$$\text{Ann}_{R}\ \text{H}^{\bullet}(\g,\g_{\0},M^{*}\otimes M)=\text{Ann}_{R}\ 
\text{H}^{\bullet}(\f,\f_{\0},M^{*}\otimes M).$$

The statement (b) follows using part (a) and Theorem~\ref{T:isosupportsfe}.
\end{proof}

For ${\mathfrak g}=W(n)$, $S(n)$ we have the following result, which verifies the conjecture
made at the end of \cite[Section 6.2]{BaKN}.

\begin{thm}\label{T:isosupports2}  Let $\g$ be $W(n)$ or $S(n)$ and let $M$ be
a finite dimensional module in
${\mathcal C}_{(\g,\g_{0})}$. The induced map of support varieties
$$\operatorname{res}^{*}:{\mathcal V}_{(\f,\f_{\0})}(M)/W
\rightarrow {\mathcal V}_{(\g,\g_{\0})}(M)$$
is an isomorphism where $W=\Sigma_{n-1}$ (resp. $\Sigma_{n-2}$) for $W(n)$ (resp. $S(n)$).
\end{thm}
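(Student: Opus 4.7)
The plan mirrors the proof of Theorem~\ref{T:isosupports}(a). The embedding
$$
\operatorname{res}^{*}: \mathcal{V}_{(\f, \f_{\0})}(M)/W \hookrightarrow \mathcal{V}_{(\g, \g_{\0})}(M)
$$
is already available from (\ref{Cartaninjection}), so the task is to prove surjectivity. As in Theorem~\ref{T:isosupports}(a), this reduces to the equality of annihilator ideals in $R = \operatorname{H}^{\bullet}(\g, \g_{0}, \C)$:
$$
\operatorname{Ann}_{R}\operatorname{Ext}^{\bullet}_{\mathcal{C}_{(\g, \g_{0})}}(M, M) = \operatorname{Ann}_{R}\operatorname{Ext}^{\bullet}_{\mathcal{C}_{(\f, \f_{0})}}(M, M),
$$
which will follow once we have a full cohomological embedding
$$
\operatorname{res}: \operatorname{H}^{n}(\g, \g_{0}, N) \hookrightarrow \operatorname{H}^{n}(\f, \f_{0}, N) \qquad (n \geq 0, \; N \in \mathcal{C}_{(\g, \g_{0})}).
$$

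To produce this embedding I would first run the dimension-shift argument of Proposition~\ref{P:reducecoho1} in the $W(n)/S(n)$ setting (only the Frobenius property of the category was used there, which still applies), reducing the problem to $n=1$. Then, taking Theorem~\ref{T:WSembedplusminus} as the analogue of Theorem~\ref{T:embedplusminus} for the pairs $(\g^{\pm}, \g_{\0})$ and $(\f^{\pm}, \f_{\0})$, I would transcribe the five-lemma and commutative diagram manipulation of Theorem~\ref{T:stablecase}: write the four differentials $d_{\g^{\pm}}, d_{\f^{\pm}}$ at the cochain level, relate $\operatorname{Im} d_{\f^{\pm}}$ to $\operatorname{Im} d_{\g^{\pm}}$ via the restriction maps $\sigma_{\pm}$, and then show that the quotient $\operatorname{Ker} d_{\g^{+}}/K_{\g} \to \operatorname{Ker} d_{\f^{+}}/K_{\f}$ is bijective. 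The surjectivity step at the end uses density of $G_{0} \cdot \f_{\pm 1}$ inside $\g_{\pm 1}$, which here holds because $\f_{\1}$ is, by construction in Section 2.5, contained in $\g_{-1} \oplus \g_{1}$, and the stable action of $G_{0} = GL(n)$ on $\g_{-1} \oplus \g_{1}$ satisfies (2.2.2).

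Once the full embedding is in hand, the annihilator equality and hence surjectivity of $\operatorname{res}^{*}$ follows verbatim from the argument of Theorem~\ref{T:isosupports}(a). The identification $W = \Sigma_{n-1}$ (respectively $\Sigma_{n-2}$) then comes from the explicit description $N = \Sigma_{n-1} \ltimes T$ and $N_{0} = T$ recalled for $W(n)$ in Section 2.5, and its analogue for $S(n)$ developed in \cite{Ba}.

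The main obstacle is that the $\Z$-grading of $\g = W(n)$ or $S(n)$ is not concentrated in degrees $-1, 0, 1$: the odd part $\g_{\1}$ contains the higher components $\g_{3}, \g_{5}, \dots$ which do not meet $\f_{\1}$. The cleanest way to neutralize this is to exploit the LHS spectral sequence for the ideal $\bigoplus_{k \geq 2}\g_{k}$ inside $\g^{+}$ together with complete reducibility over $\g_{0} = \mathfrak{gl}(n)$, reducing all the relevant calculations back to the subspace $\g_{-1} \oplus \g_{0} \oplus \g_{1}$ where the Type~I style triangular argument applies. Since this is precisely the technical point that must already be addressed in the proof of Theorem~\ref{T:WSembedplusminus}, I expect the same spectral sequence machinery to carry through here without substantial new difficulty.
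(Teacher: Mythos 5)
Your proposal is correct and follows essentially the same route as the paper: the paper's proof is the single sentence ``use the analogue of Theorem~\ref{T:stablecase} for $W(n)$ or $S(n)$ and apply the argument of Theorem~\ref{T:isosupports}(a),'' which is exactly your plan (dimension-shift via Proposition~\ref{P:reducecoho1}, then the five-lemma diagram chase of Theorem~\ref{T:stablecase} built on Theorem~\ref{T:WSembedplusminus}, then the annihilator-equality argument). You also correctly flag the real technical point the paper leaves implicit --- that the $\Z$-grading of $W(n)$ and $S(n)$ extends past degree~$1$ and must be dealt with via the spectral-sequence reduction to $\g_{-1}\oplus\g_0\oplus\g_1$, the same device used for Theorem~\ref{T:WSembedplusminus} --- so your account is actually more explicit than the published proof.
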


\begin{proof} This follows by using the analogue of Theorem~\ref{T:stablecase} for $\g=W(n)$ or
$S(n)$ and applying the argument given above in the proof of Theorem~\ref{T:isosupports}(a).
\end{proof}

\subsection{Realizability and the Tensor Product Theorem: } Theorem~\ref{T:isosupports}
allows us to provide a concrete realization for the variety ${\mathcal V}_{(\g,\g_{\0})}(M)$
when $M\in {\mathcal F}_{(\g,\g_{\0})}$. The
next theorem provides a rank variety description of ${\mathcal V}_{(\g,\g_{\0})}(M)$ when
$M$ is in ${\mathcal F}_{(\g,\g_{\0})}$.

%%%%%%%%%%%%%%%%%%%%%%%%%%
\begin{thm} \label{T:supportprop} Let ${\mathfrak g}$ be a Type I classical Lie superalgebra 
which is both stable and
polar, and let $M_{1}$, $M_{2}$ and $M$ be in ${\mathcal F}_{(\g,\g_{\0})}$. Then, writing
$W(\e)$ for the pseudoreflection group associated with $\e$,
\begin{itemize}
\item[(a)] ${\mathcal V}_{(\g,\g_{\0})}(M)\cong {\mathcal V}^{\operatorname{rank}}_{(\e,\e_{\0})}(M)/W(\e)$;
\item[(b)] ${\mathcal V}_{(\g,\g_{\0})}(M_{1}\otimes M_{2})=
{\mathcal V}_{(\g,\g_{\0})}(M_{1})\cap {\mathcal V}_{(\g,\g_{\0})}(M_{2})$.
\item[(c)] Let $X$ be a conical subvariety of ${\mathcal V}_{(\g,\g_{\0})}({\mathbb C})$. Then
there exists $L$ in ${\mathcal F}$ with $X={\mathcal V}_{(\g,\g_{\0})}(L)$.
\item[(d)] If $M$ is indecomposable then $\operatorname{Proj}({\mathcal V}_{(\g,\g_{\0})}(M))$
is connected.
\end{itemize}
\end{thm}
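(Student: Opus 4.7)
The plan is to leverage the isomorphisms already established in Theorem~\ref{T:isosupports}(b) and the rank variety machinery of Section 4.3, pushing everything down through the quotient by $W=W(\e)$ and mimicking the classical Carlson toolkit.

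For part (a), Theorem~\ref{T:isosupports}(b) already gives ${\mathcal V}_{(\g,\g_{\0})}(M)\cong {\mathcal V}_{(\e,\e_{\0})}(M)/W$, so it suffices to identify ${\mathcal V}_{(\e,\e_{\0})}(M)$ with ${\mathcal V}_{\e}^{\mathrm{rank}}(M)$. Since $\e_{\1}\subseteq \f_{\1}=\g_{\1}^{H}$ and $\e_{\0}=\operatorname{Lie}(H)$, we have $[\e_{\0},\e_{\1}]=0$, which is precisely the hypothesis that drives the rank variety isomorphism of \cite[Theorem 6.3.2]{BKN1} (used in Section 4.3 for $\overline{\f}$). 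I would copy that argument verbatim for $\e$, combine it with Theorem~\ref{T:isosupports}(b), and obtain (a).

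For part (b), using (a) the problem reduces to a tensor product theorem for rank varieties, namely ${\mathcal V}_{\e}^{\mathrm{rank}}(M_{1}\otimes M_{2})={\mathcal V}_{\e}^{\mathrm{rank}}(M_{1})\cap {\mathcal V}_{\e}^{\mathrm{rank}}(M_{2})$. This is a pointwise statement: for each nonzero $x\in \e_{\1}$, the one-dimensional abelian Lie superalgebra $\langle x\rangle$ has the property that $M_{1}\otimes M_{2}$ is $U(\langle x\rangle)$-projective iff one of $M_{1},M_{2}$ is, since $U(\langle x\rangle)$ is a two-dimensional Frobenius algebra with a unique simple module. Then I would pass to the $W$-quotient; for $W$-stable conical subvarieties $X_{1},X_{2}\subseteq {\mathcal V}_{\e}^{\mathrm{rank}}({\mathbb C})$ one verifies directly that $(X_{1}\cap X_{2})/W=X_{1}/W\cap X_{2}/W$ (any preimage of a point in the right-hand side can be $W$-translated into $X_{1}\cap X_{2}$), giving (b).

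For part (c), I would invoke the standard Carlson $L_{\zeta}$ construction, adapted to relative cohomology: for a nonzero homogeneous $\zeta\in R^{n}$ represented by a cocycle $\widehat{\zeta}\colon \Omega^{n}({\mathbb C})\to {\mathbb C}$ in ${\mathcal F}_{(\g,\g_{\0})}$ (using that $\mathcal F$ is Frobenius and admits the Heller shift), set $L_{\zeta}:=\ker \widehat{\zeta}$. Via the rank variety description in (a) and the compatibility of $R\cong S^{\bullet}(\e_{\1}^{*})^{W}$ with $R_{\e}=S^{\bullet}(\e_{\1}^{*})$, one checks ${\mathcal V}_{(\g,\g_{\0})}(L_{\zeta})=Z(\zeta)$, the hypersurface of $\zeta$. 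An arbitrary closed conical subvariety $X\subseteq {\mathcal V}_{(\g,\g_{\0})}({\mathbb C})$ is the intersection of finitely many such hypersurfaces $Z(\zeta_{1}),\dots,Z(\zeta_{r})$, so by (b) we realize $X={\mathcal V}_{(\g,\g_{\0})}(L_{\zeta_{1}}\otimes\cdots\otimes L_{\zeta_{r}})$.

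Part (d) I would deduce by the Carlson connectedness argument: if $\operatorname{Proj}{\mathcal V}_{(\g,\g_{\0})}(M)=X_{1}\sqcup X_{2}$ is a disconnection into nonempty closed pieces, then by (c) choose $L_{1},L_{2}\in {\mathcal F}$ with ${\mathcal V}_{(\g,\g_{\0})}(L_{i})$ the conical cone on $X_{i}$. Using (b) and the fact that $M$ being projective over an abelian Frobenius-type piece forces a splitting (via the cohomological idempotents obtained from the decomposition of the unit in $\operatorname{Ext}^{\bullet}(M,M)$ localized at the two components), one produces nontrivial idempotents in $\operatorname{End}_{{\mathcal F}}(M)$, contradicting indecomposability.

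The main obstacle is part (c): ensuring that the Carlson $L_{\zeta}$ construction and the identification ${\mathcal V}_{(\g,\g_{\0})}(L_{\zeta})=Z(\zeta)$ transfer cleanly through the $W$-quotient that sits between $R=S^{\bullet}(\e_{\1}^{*})^{W}$ and the rank variety on $\e_{\1}$. Once this is handled, (d) follows by a standard formal argument.
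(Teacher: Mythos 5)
Your proposal follows essentially the same route as the paper's (very terse) proof: part (a) is the rank variety realization of ${\mathcal V}_{(\e,\e_{\0})}(M)$ combined with Theorem~\ref{T:isosupports}(b); part (b) reduces to the tensor product property for ${\mathcal V}^{\operatorname{rank}}_{(\e,\e_{\0})}(-)$; and parts (c), (d) are the Carlson $L_\zeta$ and connectedness arguments from the finite group setting, which is exactly what the paper cites. Your expansion (noting $[\e_{\0},\e_{\1}]=0$ as the hypothesis driving the rank variety isomorphism, and checking that intersections of $W$-stable cones pass to the quotient) fills in the same steps the paper leaves implicit.
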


\begin{proof} (a) This statement follows from the realization of ${\mathcal V}_{(\e,\e_{\0})}(M)$
as a rank variety and Theorem~\ref{T:isosupports}(b). (b) This follows by using part (a) and
the fact that the tensor product property for support varieties holds for
${\mathcal V}^{\operatorname{rank}}_{(\e,\e_{\0})}(-)$. Parts (c) and (d) are proved using the
same arguments as those for support varieties of finite groups (cf. \cite{Car:83,Car:84}).
\end{proof}

We remark that since the stable module category of ${\mathcal F}_{(\g,\g_{\0})}$ is a symmetric monoidal
tensor category, one can consider the spectrum ``$\text{Spc}(\text{Stab }{\mathcal F}_{(\g,\g_{\0})})$'' as in
\cite{Bal}. Our results show that the
assignment $(-)\rightarrow {\mathcal V}_{(\g,\g_{\0})}(-)$ satisfies the properties
stated in Balmer's paper for support datum.

Using Theorem~\ref{T:supportquotient2} and the same arguments as in Theorem~\ref{T:supportprop}, we can
similarly realize the
support varieties for the Cartan type Lie superalgebras $W(n)$ and $S(n)$, and prove that they satisfy the
tensor product property.

\begin{thm} \label{T:supportprop2} Let ${\mathfrak g}=W(n)$ or $S(n)$ and let $M_{1}$, $M_{2}$, $M$
be finite dimensional modules in ${\mathcal C}_{(\g,\g_{0})}$.
\begin{itemize}
\item[(a)] ${\mathcal V}_{(\g,\g_{0})}(M)\cong {\mathcal V}^{\operatorname{rank}}_{(\widetilde{\f},\widetilde{\f}_{0})}(M)/[W\ltimes T]$;
\item[(b)] ${\mathcal V}_{(\g,\g_{0})}(M_{1}\otimes M_{2})=
{\mathcal V}_{(\g,\g_{0})}(M_{1})\cap {\mathcal V}_{(\g,\g_{0})}(M_{2})$.
\item[(c)] Let $X$ be a conical subvariety of ${\mathcal V}_{(\g,\g_{0})}({\mathbb C})$. Then
there exists $L$ in ${\mathcal F}$ with $X={\mathcal V}_{(\g,\g_{0})}(L)$.
\item[(d)] If $M$ is indecomposable then $\operatorname{Proj}({\mathcal V}_{(\g,\g_{0})}(M))$
is connected.
\end{itemize}
\end{thm}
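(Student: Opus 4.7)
The plan is to mirror the proof of Theorem~\ref{T:supportprop} line for line, substituting the Cartan-type inputs (Theorem~\ref{T:isosupports2} and Theorem~\ref{T:supportquotient2}) for the stable/polar inputs used there. For part (a), I would chain together the two quotient descriptions already established: Theorem~\ref{T:isosupports2} gives ${\mathcal V}_{(\f,\f_{0})}(M)/W \xrightarrow{\sim} {\mathcal V}_{(\g,\g_{0})}(M)$, while Theorem~\ref{T:supportquotient2}(b) realizes ${\mathcal V}_{(\f,\f_{0})}(M)$ as a $T$-quotient of the rank variety ${\mathcal V}^{\operatorname{rank}}_{\widetilde{\f}}(M)$. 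Composing the two quotient maps and combining the torus $T$ with $W$ into the semidirect product $W\ltimes T$ (which is precisely how $W$ sits inside the normalizer and acts on $T$) yields the desired isomorphism
\[
{\mathcal V}_{(\g,\g_{0})}(M)\cong {\mathcal V}^{\operatorname{rank}}_{(\widetilde{\f},\widetilde{\f}_{0})}(M)/[W\ltimes T].
\]

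For part (b), I would first establish the tensor product formula at the level of the rank variety: since $\widetilde{\f}_{1}$ is abelian, for any $x\in\widetilde{\f}_{1}$ the enveloping algebra $U(\langle x\rangle)$ is a graded Hopf algebra, so $M\otimes N$ is $U(\langle x\rangle)$-projective if and only if at least one of $M,N$ is. This gives ${\mathcal V}^{\operatorname{rank}}_{\widetilde{\f}}(M\otimes N)={\mathcal V}^{\operatorname{rank}}_{\widetilde{\f}}(M)\cap {\mathcal V}^{\operatorname{rank}}_{\widetilde{\f}}(N)$. Intersections are preserved under the set-theoretic image of the quotient by $W\ltimes T$, so transporting this identity through the isomorphism of part (a) yields the tensor product theorem for ${\mathcal V}_{(\g,\g_{0})}$.

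Parts (c) and (d) are then purely formal consequences of (a) and (b), obtained by transplanting the classical Carlson machinery. For (c), given a conical subvariety $X\subseteq {\mathcal V}_{(\g,\g_{0})}({\mathbb C})$, one lifts a defining ideal to homogeneous classes in $R$ and assembles Carlson's $L_{\zeta}$ modules in the Frobenius category ${\mathcal F}$; the rank-variety realization in (a) guarantees that the product of these modules has support variety equal to $X$. For (d), the standard argument applies: if $\operatorname{Proj}({\mathcal V}_{(\g,\g_{0})}(M))$ decomposed as a disjoint union of closed sets realized by modules $L_{1},L_{2}$ from (c), then the tensor product formula of (b) together with the projectivity criterion would force $M$ to split, contradicting indecomposability.

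The only nontrivial obstacle is in part (a): one must verify that the $W$-action and the $T$-action on ${\mathcal V}^{\operatorname{rank}}_{\widetilde{\f}}(M)$ interact in the expected semidirect way, so that the composed quotient is indeed a quotient by a single group $W\ltimes T$. This is not a computation but an invariant-theoretic compatibility, and it should follow from the explicit description in Section~2.5 where $N = \Sigma_{n-1}\ltimes T$; once it is recorded, everything else is a straightforward repackaging of the argument already given for Theorem~\ref{T:supportprop}.
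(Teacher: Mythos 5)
Your proposal is correct and follows precisely the route the paper intends: the paper's proof of Theorem~\ref{T:supportprop2} is the single remark ``Using Theorem~\ref{T:supportquotient2} and the same arguments as in Theorem~\ref{T:supportprop}...'', and you have carried out exactly that substitution (Theorem~\ref{T:isosupports2} and Theorem~\ref{T:supportquotient2} in place of Theorem~\ref{T:isosupports} and the $\e$-rank-variety facts), filling in the details the paper elides. One small note: the quotient description of ${\mathcal V}_{(\f,\f_{0})}(M)$ by $T$ that you want is what part~(a) of Theorem~\ref{T:supportquotient2} gives after identifying ${\mathcal V}_{(\widetilde{\f},\widetilde{\f}_{0})}(M)$ with the rank variety, not part~(b) as you wrote (part~(b) as printed has the $T$-quotient on the other side); your chain of isomorphisms is nevertheless the intended one.
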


\subsection{Connections with Atypicality} Let $\g$ be a basic classical Lie superalgebra
with a non-degenerate invariant supersymmetric even bilinear form $(-,-)$. 
For a weight $\lambda\in {\mathfrak t}^{*}$,  the
\emph{atypicality} of $\lambda$ is the maximal number of linearily independent, mutually orthogonal,
positive isotropic roots $\alpha \in  \Phi^{+}$ such that $ (\lambda+\rho,\alpha)=0$ where
where $\rho=\frac{1}{2}(\sum_{\alpha\in \Phi_{\0}^{+}} \alpha-\sum_{\alpha\in \Phi_{\1}^{+}} \alpha)$.

For a simple finite dimensional ${\mathfrak g}$-supermodule $L(\lambda)$ with
highest weight $\lambda$, the atypicality of $L(\lambda)$ is defined to be $\text{atyp}(L(\lambda)):=
\text{atyp}(\lambda)$. In \cite[Conjecture 7.2.1]{BKN1}, a conjecture was stated relating the
atypicality of $L(\lambda)$ to the dimension of ${\mathcal V}_{(\e,\e_{\0})}(L(\lambda))$. This
conjecture was verified for ${\mathfrak g}=\mathfrak{gl}(m|n)$ in \cite{BKN2}. In light of
the results of the previous section, it seems reasonable to modify the ``Aypicality Conjecture''
to a statement which does not involve detecting subalgebras.

\begin{conj}\label{C:atypconjecture} Let $\g$ be a simple basic classical Lie superalgebra
and let $L(\lambda)$ be a finite dimensional simple $\g$-supermodule.
Then $$\operatorname{atyp}(L(\lambda))=\dim {\mathcal V}_{(\g,\g_{\0})}(L(\lambda)).$$
\end{conj}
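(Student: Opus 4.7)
The plan is to leverage the realization of support varieties as rank-variety quotients (Theorem~\ref{T:supportprop}(a)) to convert the atypicality statement into a weight-theoretic calculation over the Cartan subspace $\e_{\1}$. Since the conjecture is stated for \emph{all} simple basic classical Lie superalgebras, I would split into the Type I case, where the machinery of Section~3 applies, and the Type II case, which is the principal obstacle.

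For Type I: Theorem~\ref{T:supportprop}(a) identifies ${\mathcal V}_{(\g,\g_{\0})}(L(\lambda))$ with ${\mathcal V}^{\operatorname{rank}}_{\e_{\1}}(L(\lambda))/W(\e)$. Since $W(\e)$ is finite, the quotient preserves Krull dimension, so it suffices to prove $\operatorname{atyp}(L(\lambda))=\dim {\mathcal V}^{\operatorname{rank}}_{\e_{\1}}(L(\lambda))$. To carry this out, one chooses a basis of $\e_{\1}$ indexed by a maximal set $\{\alpha_{1},\dots,\alpha_{r}\}$ of mutually orthogonal isotropic positive roots, taking $x_{i}=e_{\alpha_{i}}+e_{-\alpha_{i}}$. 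For a point $c=\sum c_{i} x_{i}$, the module $L(\lambda)$ fails to be projective over $U(\langle c\rangle)$ precisely when it has a summand annihilated by $c$; a weight-space analysis (plus use of the nondegenerate invariant form) translates this into the existence of indices $i$ for which $(\lambda+\rho,\alpha_{i})=0$. Consequently ${\mathcal V}^{\operatorname{rank}}_{\e_{\1}}(L(\lambda))$ is a union of coordinate subspaces whose maximal dimension is $\operatorname{atyp}(L(\lambda))$. For $\g=\mathfrak{gl}(m|n)$ this program was completed in \cite{BKN2}; for the remaining Type I algebras ($A(m,n)$, $C(n)$, $P(n)$) the same template should apply after modifications for their particular root systems (and, for $P(n)$, the absence of an invariant form, which would force a substitute pairing drawn from the defining representation).

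The principal obstacle is Type II. The cohomological detection results of Section~3, and hence Theorem~\ref{T:supportprop}, depend crucially on the triangular decomposition supplied by the $\Z$-grading, which is not available for $B(m,n)$, $D(m,n)$, $D(2,1;\alpha)$, $G(3)$, and $F(4)$. Two plausible routes would be: (i) extend the detection theorem to Type II by replacing the Lyndon--Hochschild--Serre argument for the pair $(\g_{1},\{0\})$ with a more elaborate filtration coming from the root space decomposition, exploiting the nondegenerate invariant form on $\g$ to control the restriction map on cocycles; or (ii) work directly with $\operatorname{H}^{\bullet}(\g,\g_{\0},L(\lambda)^{*}\otimes L(\lambda))$, using translation functors and Kac--Wakimoto character formulas for atypical blocks to reduce to smaller defect, with induction on atypicality supplying the base case. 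Either path is substantial, and a complete proof in full generality likely requires genuinely new ideas beyond the methods developed in the present paper.
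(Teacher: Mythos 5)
This statement is labeled a \emph{Conjecture} in the paper, not a theorem, and the paper supplies no proof. The only supporting evidence the paper offers is the remark that the conjecture was verified for $\g=\mathfrak{gl}(m|n)$ in \cite{BKN2}, with support varieties of all simple modules completely described there; for other basic classical Lie superalgebras it remains open. So there is no proof in the paper to compare your attempt against, and your own closing admission --- that a complete argument ``likely requires genuinely new ideas beyond the methods developed in the present paper'' --- is consistent with the actual status of the statement.

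As a strategy outline, your Type~I sketch is sensible and is essentially the route taken in \cite{BKN2} for $\mathfrak{gl}(m|n)$: use Theorem~\ref{T:supportprop}(a) to pass to the rank variety over $\e_{\1}$, choose a basis of $\e_{\1}$ adapted to a maximal family of mutually orthogonal isotropic roots, and carry out the weight-space analysis. Two caveats. First, you list $P(n)$ among the Type~I cases to be handled, but $P(n)$ is \emph{not} basic (it has no nondegenerate invariant even supersymmetric bilinear form), and the conjecture as stated restricts to \emph{basic} classical Lie superalgebras; atypicality as defined in the paper also needs the form. So $P(n)$ is out of scope and your aside about ``a substitute pairing'' is addressing a case the conjecture does not cover. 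Second, Theorem~\ref{T:supportprop}(a) needs both stability and polarity of the $G_{\0}$-action, so for each Type~I candidate you would still have to verify those hypotheses before the rank-variety reduction applies. Your discussion of Type~II correctly identifies the fundamental obstacle: the cohomological detection results of Section~3 hinge on the $\Z$-grading with $\g_{\1}=\g_{-1}\oplus\g_{1}$, which is unavailable for $B(m,n)$, $D(m,n)$, $D(2,1;\alpha)$, $G(3)$, $F(4)$, and neither route you sketch is close to a proof. In short, the proposal is a reasonable research plan, not a proof, and you are right that the conjecture is genuinely open beyond $\mathfrak{gl}(m|n)$.
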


In \cite{BKN2} this modified version of the conjecture has been also verified, and the support
varieties for the simple modules have been completely described. This leads one to believe that the
atypicality for any classical Lie superalgebra ${\mathfrak g}$ should be defined for all modules $M$
in ${\mathcal F}_{(\g,\g_{\0})}$ (in a functorial way) as the dimension of ${\mathcal V}_{(\g,\g_{\0})}(M)$.

For the Lie superalgebras of Cartan type, Serganova \cite{Se} defined the notion of typical and atypical
weights.  In \cite[Theorem 7.3.1]{BaKN}, it was shown that for typical weights the support varieties for
simple $W(n)$-modules is $\{0\}$, and for atypical weights the support varieties are equal to
${\mathcal V}_{(\g,\g_{0})}({\mathbb C})$. In this case it also makes sense to define the atypicality
of a finite dimensional module in ${\mathcal C}_{(\g,\g_{0})}$ as $\dim{\mathcal V}_{(\g,\g_{0})}(M)$.


\begin{thebibliography}{99999999}
\bibliographystyle{amsalpha}


\bibitem[Ben]{benson} David~J. Benson, {\em Representations and Cohomology. {II}}, second ed.,
  Cambridge Studies in Advanced Mathematics, vol.~31, Cambridge University
  Press, Cambridge, 1998.

\bibitem[Ba]{Ba} Irfan Bagci, Cohomology and Support Varieties for Lie Superalgebras,
Ph.D Thesis, University of Georgia, 2009.

\bibitem[BaKN]{BaKN}
Irfan Bagci, Jonathan~R. Kujawa, and Daniel~K. Nakano, {\em Cohomology and
  support varieties for the Lie superalgebra $W(n)$}, International Math. Research
Notices, doi:10.1093/imrn/rnn115, (2008).

\bibitem[Bal]{Bal} Paul Balmer, {\em The spectrum of prime ideals in tensor triangulated
categories}, J. Reine Angew. Math. \textbf{588} (2005), 149--168.


\bibitem[BKN1]{BKN1}
Brian~D. Boe, Jonathan~R. Kujawa, and Daniel~K. Nakano, {\em Cohomology and
  support varieties for {L}ie superalgebras}, {\em Transactions of the  AMS} 
  \textbf{362} (2010), 6551-6590. 

\bibitem[BKN2]{BKN2}
\bysame, {\em {Cohomology and support varieties for Lie superalgebras II}},
  Proc. London Math. Soc. \textbf{98} (2009), no.~1, 19--44.


\bibitem[BKN3]{BKN3}
\bysame, {\em {Complexity and module varieties for classical Lie superalgebras}},
International Math. Research Notices, doi:10.1093/imrn/rnq090, (2010). 


\bibitem[Car1]{Car:83}
Jon~F. Carlson, {\em The varieties and the cohomology ring of a module}, J.
  Algebra \textbf{85} (1983), no.~1, 104--143.

\bibitem[Car2]{Car:84}
Jon~F. Carlson, {\em The variety of an indecomposable module is connected}, Invent. Math.,
\textbf{77} (1984), no.~2, 291-299.

\bibitem[CPS]{CPS}
Edward~T. Cline, Brian~J. Parshall, and Leonard~L. Scott,
{\em Finite-dimensional algebras and highest weight categories},
J. Reine Angew. Math. \textbf{391} (1988), 85--99.


\bibitem[DK]{dadokkac}
J. Dadok and V.~G. Kac, {\em Polar representations}, J. Algebra \textbf{92}
  (1985), no.~2, 504--524.

\bibitem[LR]{luna}
D.~Luna and R.~W. Richardson, {\em A generalization of the {C}hevalley
  restriction theorem}, Duke Math. J. \textbf{46} (1979), no.~3, 487--496.

\bibitem[Se]{Se}
Vera Serganova, {\em On representations of Cartan type Lie superalgebras},
Amer. Math. Soc. Transl., \textbf{213} (2005), no. 2, 223-239.

\end{thebibliography}
\end{document}